\numberwithin{equation}{section}
\newtheoremstyle{mydefinitionstyle}
  {2em}
  {2em}
  {}
  {}
  {\scshape}
  {.}
  {0.5em}
  {}
\theoremstyle{mydefinitionstyle}
\newtheorem{definition}{Definition}
\newtheoremstyle{mytheoremstyle}
  {2em}
  {2em}
  {\itshape}
  {}
  {\bfseries}
  {.}
  {0.5em}
  {}
\theoremstyle{mytheoremstyle}
\newtheorem{theorem}{Theorem}[section]
\newtheorem{lemma}[theorem]{Lemma}
\newtheorem{proposition}[theorem]{Proposition}
\newtheorem{corollary}[theorem]{Corollary}
\newtheoremstyle{myremarkstyle}
  {1.6em}
  {1.6em}
  {}
  {}
  {\scshape}
  {.}
  {0.5em}
  {}
\theoremstyle{myremarkstyle}
\newtheorem{remark}{Remark}[section]
\newcommand\R{\mathbb{R}}
\title{The Sharp Measure Upper Bound of the Nodal Sets of Neumann Laplace Eigenfunctions on $C^{1,1}$ Domains}
\author[1]{Xiujin Chen}
\author[2]{Xiaoping Yang}
\affil[1]{\small School of Mathematics, Nanjing University, Nanjing 210093, China, \texttt{xiujin\_chen@smail.nju.edu.cn}}
\affil[2]{\small School of Mathematics, Nanjing University, Nanjing 210093, China, \texttt{xpyang@nju.edu.cn}}
\date{}
\begin{document}

\maketitle

\begin{abstract}
    \footnote{2020 Mathematics Subject Classification. 35B05.} Let $\Omega$ be a bounded domain in $\R^n$ with $C^{1,1}$ boundary and let $u_\lambda$ be a Neumann Laplace eigenfunction in $\Omega$ with eigenvalue $\lambda$. We show that the $(n-1)$-dimensional Hausdorff measure of the zero set of $u_\lambda$ does not exceed $C\sqrt{\lambda}$.
\end{abstract}

\begin{section}{Introduction}

In this paper, our goal is to obtain the sharp upper bound for the area of the nodal sets of Neumann Laplace eigenfunctions in $C^{1,1}$ domains. The nodal set of $u$ is denoted by $Z(u)=\{ x\in\Omega : u(x)=0 \}$. 

Throughout this paper, we always use the notation $\nu$ to indicate the unit outer normal vector field on the boundary in the context. With this convention, our main result is stated as follows.

\begin{theorem}\label{main thm}
    Let $\Omega$ be a bounded $C^{1,1}$ domain and let $u_\lambda$ satisfy $\Delta u_\lambda + \lambda u_\lambda = 0$ in $\Omega$ and $\frac{\partial u_\lambda}{\partial \nu}|_{\partial \Omega} = 0$. Then
    \begin{equation*}
        \mathcal{H}^{n-1} (Z(u_\lambda)) \leq C\sqrt{\lambda},
    \end{equation*}
    where $C$ depends only on $\Omega$.
\end{theorem}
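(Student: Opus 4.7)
The plan is to reduce the global bound to a uniform local estimate at scale $r=c_0/\sqrt{\lambda}$ with $c_0$ a small geometric constant: for every $x_0\in\overline{\Omega}$,
\[
\mathcal{H}^{n-1}\bigl(Z(u_\lambda)\cap B_r(x_0)\bigr)\le Cr^{n-1}.
\]
Once this local bound is established, a Vitali cover of $\Omega$ by such balls of bounded overlap and cardinality $\lesssim |\Omega|/r^n$ yields $\mathcal{H}^{n-1}(Z(u_\lambda))\lesssim |\Omega|/r \lesssim \sqrt{\lambda}$. For interior balls with $B_{2r}(x_0)\subset\Omega$ this reduces, after rescaling by $r$, to the classical interior nodal-set bound for solutions of $\Delta v+c_0^2\,v=0$ on $B_1$, whose Almgren frequency is universally bounded.

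The delicate case is $\mathrm{dist}(x_0,\partial\Omega)<2r$. Here I would flatten $\partial\Omega\cap B_{Cr}(x_0)$ by a $C^{1,1}$ diffeomorphism $\Phi$ that sends the outer normal to $-e_n$; then $\tilde u:=u_\lambda\circ\Phi^{-1}$ satisfies a divergence-form equation $\partial_i(a^{ij}\partial_j\tilde u)+\lambda b\,\tilde u=0$ on a half-ball with Lipschitz coefficients $a^{ij},b$ and the conormal Neumann condition $a^{nj}\partial_j\tilde u|_{y_n=0}=0$. Even reflection of $\tilde u$ across $\{y_n=0\}$, together with the natural reflection of the coefficients (even for entries $(i,j)$ with $i,j<n$ or $i=j=n$, odd otherwise), produces a weak solution of a divergence-form elliptic equation on the whole ball $B_{Cr}$; the Neumann condition is precisely what guarantees the extension is a weak solution across the flattened face, and the principal coefficients remain Lipschitz. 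Rescaling to the unit ball, $\lambda r^2=c_0^2$ and every coefficient norm is $O(1)$, so the rescaled solution has uniformly bounded doubling index. Invoking the $\mathcal{H}^{n-1}$ nodal-set bound for divergence-form elliptic equations with Lipschitz coefficients and bounded frequency yields $\mathcal{H}^{n-1}(Z(\tilde u)\cap B_1)\le C$; pulling back through $\Phi$ and rescaling recovers the local bound.

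The main obstacle lies entirely in the boundary case: one must establish simultaneously (i) a uniform doubling/frequency bound for the reflected function at scale $r\sim 1/\sqrt{\lambda}$, with constants depending only on the $C^{1,1}$ geometry of $\partial\Omega$, and (ii) the sharp local nodal-set bound of order $r^{n-1}$ for divergence-form elliptic equations with Lipschitz coefficients. The first point requires an Almgren--Garofalo--Lin frequency argument adapted to the reflected equation, propagating smallness across the reflected face without degradation as $\lambda\to\infty$. The second is where the $C^{1,1}$ hypothesis is genuinely used, since lower regularity of $\partial\Omega$ would leave only bounded measurable principal coefficients after reflection, a class in which the sharp $r^{n-1}$ per-ball bound is not available. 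Interior analyticity of $u_\lambda$ does not by itself rescue the boundary layer of thickness $\sim 1/\sqrt{\lambda}$, so the reflection scheme, the uniform doubling estimate, and the Lipschitz-coefficient nodal-set bound must be combined to produce the sharp rate.
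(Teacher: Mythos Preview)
Your proposal contains a genuine gap at its very first step: the claimed uniform local bound $\mathcal{H}^{n-1}(Z(u_\lambda)\cap B_r(x_0))\le Cr^{n-1}$ at the wavelength scale $r=c_0/\sqrt{\lambda}$ is false in general, and your justification for it is incorrect. Solutions of $\Delta v+c_0^2 v=0$ on $B_1$ do \emph{not} have universally bounded Almgren frequency. Concretely, for Neumann eigenfunctions on a disk of the form $J_m(\sqrt{\lambda}\,\rho)\cos(m\theta)$ with $m\sim\sqrt{\lambda}$, the vanishing order at the origin is $m$, so the frequency at the origin---at every scale, in particular at $r\sim1/\sqrt{\lambda}$---is $\sim\sqrt{\lambda}$, not $O(1)$. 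After rescaling you still only know $N\lesssim\sqrt{\lambda}$, and the best per-ball nodal estimate then gives $\mathcal{H}^{n-1}(Z\cap B_r)\lesssim\sqrt{\lambda}\,r^{n-1}$. Summed over $\sim r^{-n}\sim\lambda^{n/2}$ balls this yields $\lambda$, not $\sqrt{\lambda}$. The same overcounting occurs in your boundary scheme: the reflected Lipschitz-coefficient solution has doubling index $\lesssim\sqrt{\lambda}$, not $O(1)$, so the boundary layer alone would already contribute $\lambda$.

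The paper avoids this by working at a \emph{fixed} scale independent of $\lambda$ and absorbing all $\lambda$-dependence into the doubling index. It passes to the harmonic lift $h(x,t)=e^{\sqrt{\lambda}t}u_\lambda(x)$ on $\Omega\times\R$, covers $\Omega\times[-1,1]$ by $O_\Omega(1)$ balls of fixed radius, and shows $N_h\le C\sqrt{\lambda}$ on each. The entire weight of the argument then lies in proving a \emph{linear-in-$N$} nodal bound $\mathcal{H}^{n-1}(Z(h)\cap B)\le C(N+1)r^{n-1}$ for harmonic $h$ with Neumann data near a $C^{1,1}$ boundary. This is obtained not by flattening and reflecting, but by approximating the Neumann problem in a boundary neighbourhood by a mixed problem in a straight cylinder with error $O(\tau)$ (Theorem~\ref{thm approximation}, via a maximum-principle barrier), and then running the Logunov--Malinnikova--Nadirashvili--Nazarov cube-subdivision scheme: among the $2^{k(n-1)}$ boundary subcubes one always finds one whose doubling index halves (large-$N$ case) or whose intersection with $Z(h)$ is empty (small-$N$ case). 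This additivity/drop mechanism is exactly what your ball-by-ball count at wavelength scale lacks.
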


The argument we adopt mainly follows the spirit of Logunov, Malinnikova, Nadirashvili and Nazarov's work \cite{logunov2021sharp}, where they studied the nodal sets of Dirichlet eigenfunctions. But for the Neumann problem, the obstacles are very different especially near the boundary. We also improve the argument in \cite{logunov2021sharp} when we consider a boundary neighborhood with the large doubling index. Beside that, the main novelty of this paper is in Section \ref{Approximation for the Neumann Problem}, where we construct an approximation for the Neumann problem by delicately applying the maximum principle. 

We briefly introduce the history of the study on the nodal sets of Laplace eigenfunctions. Let $\Delta_M$ be the Laplace operator on a 
$n$-dimensional smooth compact manifold $M$ and $u_\lambda$ be a corresponding eigenfunction, $\Delta_M u_\lambda + \lambda u_\lambda = 0$ on $M$. S. T. Yau \cite{yau1982seminar} conjectured that the surface area of the nodal set of $u_\lambda$ satisfies
\begin{equation}\label{yau's conj}
    c(M)\sqrt{\lambda} \leq \mathcal{H}^{n-1}(Z(u_\lambda)) \leq C(M)\sqrt{\lambda}.
\end{equation}
Donelly and Fefferman \cite{donnelly1988nodal} proved the conjecture for manifolds with analytic metrics. For smooth manifolds, the lower bound and a polynomial upper bound were obtained by Logunov in \cite{logunov2018nodallower} and \cite{logunov2018nodalupper} respectively. For other related works, see the references in \cite{logunov2018nodalupper,logunov2018nodallower}.

For boundary value problems, Donelly and Fefferman \cite{donnelly1990nodal} also proved the estimate \eqref{yau's conj} for manifolds with analytic boundaries assuming that $u_\lambda$ satisfies either Dirichlet or Neumann boundary condition. Kukavica \cite{kukavica1995nodal} generalized their results to eigenfunctions of elliptic operators with real analytic coefficients. Liu, Tian and Yang \cite{liu2024measure} obtained a measure upper bound for Robin eigenfunctions when the boundary is piecewise analytic. Lin and Zhu \cite{lin2022upper} proved the similar estimates to \eqref{yau's conj} for bi-Laplacian eigenfunctions with the assumption that the boundary is real analytic. Tian and Yang \cite{tian2022measure} obtained the sharp upper bound for bi-Laplacian eigenfunctions when the boundaries are piecewise analytic. 

For domains that are not analytic, there are less results. The polynomial upper bounds for the area of the zero set of Robin Laplace eigenfunctions in smooth domains was obtained by Zhu \cite{zhu2018nodal}. The sharp upper bound for the area of the zero set of Dirichlet Laplace eigenfunctions in $C^1$ domains or Lipschitz domains with small Lipschitz constants was obtained by Logunov, Malinnikova, Nadirashvili and Narazov \cite{logunov2021sharp}.

This paper is organized as follows. In Section \ref{Preliminaries}, we introduce the definition of Lipschitz domains with small Lipschitz constants and list several useful lemmas. In Section \ref{Doubling Index}, we introduce facts about the doubling index, especially the almost monotonicity. In Section \ref{Approximation for the Neumann Problem}, we construct the standard approximation to the Neumann problem near the boundary. In Section \ref{Nodal Sets near the Boundary}, we use the approximation in Section \ref{Approximation for the Neumann Problem} to study the nodal sets near the boundary. In Section \ref{Proof of Theorem}, we complete the proof of Theorem \ref{nodal set bounds for harmonic functions}, which is a measure upper bound of nodal sets for harmonic functions. In Section \ref{Neumann Laplace Eigenfunctions}, we introduce the standard extension method for Laplace eigenfunctions and use Theorem \ref{nodal set bounds for harmonic functions} to complete the proof of the main theorem.

\end{section}

\begin{section}{Preliminaries}\label{Preliminaries}

\begin{subsection}{Lipschitz domains with small Lipschitz constants}

Throughout this paper, for any $x\in\R^n$, we use the notation $x=(x',x'')\in\R^{n-1}\times\R$ where $x'$ is the first $n-1$ coordinates of $x$ and $x''$ is the last coordinate of $x$.

\begin{definition}\label{def local tau Lip}
    Let $\Omega$ be a domain in $\R^n$, $\tau\in(0,1)$, and let $B=B(x,r)$ for some $x\in\partial\Omega$. We say that $\partial\Omega\cap B$ is $\tau$-Lipschitz if there is an isometry $T$: $\R^n\to\R^n$ and a Lipschitz function $f$: $B^{n-1}(0,r)\to\R$ with the Lipschitz constant bounded by $\tau$ such that $T(0)=x$, $f(0)=0$ and
    \begin{equation*}
        \Omega \cap B = T \left( \{ (y',y'')\in B^n(0,r) \subset \R^{n-1} \times \R: y''>f(y') \} \right).
    \end{equation*}
    In this case we write $\partial\Omega\cap B\in Lip(\tau)$.
\end{definition}

\begin{remark}\label{choice of the coordinate}
    In this paper, most of the arguments are local. When considering the part of the boundary $\partial\Omega\cap B(x,r)$, we choose the local coordinate system in $B(x,r)$ such that $x=0$ and $e_n=-\nu(x)$, where $e_n$ is the unit vector in the direction of the last coordinate.
\end{remark}

\begin{definition}
    We say that $\Omega$ is a Lipschitz domain with local Lipschitz constant $\tau$ if there exists $r>0$ such that $\partial \Omega \cap B(x,r) \in Lip(\tau)$ for any $x \in \partial \Omega$.
\end{definition}

\begin{remark}
    Clearly, any bounded $C^1$ domain is a Lipschitz domain with local Lipschitz constant for any positive $\tau$. Of course so is any bounded $C^{1,1}$ domain. 
\end{remark}
    
\end{subsection}

\begin{subsection}{Auxiliary lemmas}

We list several useful lemmas that will be repeatedly used in the following sections. The first one is the local boundedness of solutions to the Neumann problem.

\begin{lemma}\label{local boundedness}
    Let $\Omega$ be a bounded Lipschitz domain, $h\in C(\overline{\Omega})$, $\Delta u=0$ in $\Omega$, and $\frac{\partial h}{\partial \nu}|_{\partial \Omega} = 0$. Then there exists $r_0>0$ such that for any $x\in\overline{\Omega}$ and $0<r<r_0$,
    \begin{equation*}
        \sup_{B(x,r)\cap\Omega} h^2 \leq \frac{C}{|B(x,2r)|} \int_{B(x,2r)\cap\Omega} h^2,
    \end{equation*}
    where $C$ depends only on the Lipschitz character of $\Omega$.
\end{lemma}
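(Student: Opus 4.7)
The plan is to combine a classical interior Moser estimate with a boundary version that exploits the Neumann condition through the weak formulation. First I would choose $r_0>0$ small enough that for every $x\in\overline{\Omega}$ and every $0<r<r_0$, either $B(x,2r)\subset\Omega$ (interior case), or there exists $x_0\in\partial\Omega$ with $B(x,2r)\subset B(x_0,4r_0)$ and $\partial\Omega\cap B(x_0,4r_0)\in Lip(\tau_0)$ for some fixed $\tau_0\in(0,1)$ coming from the local Lipschitz character of $\Omega$ (boundary case). The interior case is classical: $\Delta h=0$ forces $h^2$ to be subharmonic, and the sub-mean-value inequality yields the claim immediately.

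For the boundary case the key point is that $\frac{\partial h}{\partial\nu}\big|_{\partial\Omega}=0$ gives the global weak formulation
\begin{equation*}
\int_\Omega \nabla h\cdot\nabla\varphi\,dx=0\qquad\text{for all }\varphi\in H^1(\Omega),
\end{equation*}
with no vanishing condition on $\partial\Omega$. I can therefore test against $\varphi=\eta^2|h|^{p-2}h$ (regularized by $(h^2+\varepsilon)^{(p-2)/2}h$ and then letting $\varepsilon\to 0$), where $\eta\in C_c^\infty(B(x,2r))$ is a cutoff that is \emph{not} required to vanish on $\partial\Omega$. Expanding and applying Cauchy--Schwarz in the usual way yields a Caccioppoli-type estimate
\begin{equation*}
\int_\Omega \eta^2\bigl|\nabla\bigl(|h|^{p/2}\bigr)\bigr|^2\,dx\;\leq\;Cp^2\int_\Omega |\nabla\eta|^2|h|^p\,dx.
\end{equation*}

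Next I would invoke the Sobolev embedding on the Lipschitz region $\Omega\cap B(x_0,4r_0)$, available with constants depending only on $\tau_0$ and $r_0$ via the Sobolev extension operator for Lipschitz domains. Combining with the Caccioppoli bound applied to nested cutoffs on concentric balls $B_{r'}\subset B_r\subset B(x,2r)$ produces the self-improving estimate
\begin{equation*}
\Bigl(\tfrac{1}{|B_{r'}|}\int_{\Omega\cap B_{r'}}|h|^{p\gamma}\Bigr)^{1/(p\gamma)}\;\leq\;\Bigl(\tfrac{C}{(r-r')^2}\Bigr)^{1/p}\Bigl(\tfrac{1}{|B_r|}\int_{\Omega\cap B_r}|h|^p\Bigr)^{1/p}
\end{equation*}
with $\gamma=n/(n-2)>1$. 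A standard Moser iteration with exponents $p_k=2\gamma^k$ and a geometric sequence of radii between $r$ and $2r$ then yields the desired $L^\infty$--$L^2$ bound.

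The main obstacle I expect is controlling the Sobolev constants uniformly across all boundary balls of radius $<r_0$: the extension/Sobolev inequality naturally lives at a single fixed scale, while the lemma must be uniform in $r$. I would handle this by a rescaling $y=(z-x)/r$, under which the Lipschitz graph representation of $\partial\Omega\cap B(x_0,4r_0)$ keeps its Lipschitz constant $\tau_0$, so the inequality reduces to a fixed reference geometry and the final constant depends only on $\tau_0$ (equivalently, on the Lipschitz character of $\Omega$).
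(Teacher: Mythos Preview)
Your proposal is correct and is precisely the ``standard Moser method'' that the paper invokes (the paper gives no details beyond citing Lemma~3.1 in \cite{lanzani2005robin}). The crucial observation you identify---that the Neumann condition allows test functions not vanishing on $\partial\Omega$, so the Caccioppoli step goes through with cutoffs supported in balls crossing the boundary---is exactly the point, and your rescaling remark correctly handles the uniformity in $r$.
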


\begin{proof}
    The standard Moser method. See, e.g., Lemma 3.1 in \cite{lanzani2005robin}.
\end{proof}

\begin{lemma}\label{local gradient estimate}
    Let $\Omega$ be a bounded $C^{1,1}$ domain, $h\in C^1(\overline{\Omega})$, $\Delta u=0$ in $\Omega$, and $\frac{\partial h}{\partial \nu}|_{\partial \Omega} = 0$. Then there exists $r_0>0$ such that for any $x\in\overline{\Omega}$ and $0<r<r_0$,
    \begin{equation*}
        \sup_{B(x,r)\cap\overline{\Omega}} |\nabla h| \leq \frac{C}{r} \sup_{B(x,2r)\cap\Omega} |h|,
    \end{equation*}
    where $C$ depends only on the $C^{1,1}$ character of $\Omega$.
\end{lemma}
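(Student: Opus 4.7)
My plan is to split the estimate into an interior part and a boundary part. For an interior ball $B(x,2r) \subset \Omega$, the classical Cauchy estimate for harmonic functions immediately gives $\sup_{B(x,r)} |\nabla h| \le (C/r)\sup_{B(x,2r)}|h|$. For the boundary case, by a covering argument it suffices to treat a ball centred at a boundary point $x_0 \in \partial\Omega$. Working in the local coordinates of Remark \ref{choice of the coordinate}, I will flatten the boundary using a diffeomorphism built from the signed distance function, reduce the problem to a divergence-form equation with Lipschitz coefficients on a half-ball, reflect across the flat piece, and apply standard interior regularity.

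The key construction uses $z_n := d(x) = \operatorname{dist}(x, \partial\Omega)$ as the last new coordinate, together with a $C^{1,1}$ tangential parametrisation of $\partial\Omega$ (extended to be constant along normal lines) as the first $n-1$ coordinates. Because $\partial\Omega$ is $C^{1,1}$, both $d$ and the nearest-point projection $\pi$ are $C^{1,1}$ (respectively Lipschitz) in a tubular neighbourhood of $0$, so the resulting map $\Psi$ is a $C^{1,1}$ diffeomorphism from $\Omega \cap B(0,r_0)$ onto a half-ball $B^+_\rho = B(0,\rho) \cap \{z_n > 0\}$, taking $\partial\Omega$ to $\{z_n = 0\}$. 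In the new coordinates $\tilde h = h\circ\Psi^{-1}$ weakly solves $\partial_i(a^{ij}\partial_j \tilde h) = 0$ on $B^+_\rho$ with $a^{ij} \in C^{0,1}$ uniformly elliptic. Because $\nabla d$ is parallel to $\nu$ on $\partial\Omega$ while each $\nabla z_j$ ($j<n$) is tangent there, the mixed entries satisfy $a^{nj}(z',0) = 0$ for $j<n$, and the Neumann condition $\partial_\nu h = 0$ transforms into the clean vertical derivative condition $\partial_{z_n}\tilde h(z',0) = 0$. Extend $\tilde h$ across $\{z_n=0\}$ by even reflection and extend the coefficients $a^{ij}$ by even or odd reflection according to the parity of the index $n$; because the odd-extended coefficients vanish on the plane by construction, the reflected coefficients remain Lipschitz on the full ball $B_\rho$, and the extended $\tilde h$ is a weak solution of the extended equation. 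Standard interior $W^{2,p}$ estimates for divergence-form equations with Lipschitz coefficients (any fixed $p>n$), followed by Sobolev embedding, give
$$\sup_{B_{\rho/2}}|\nabla\tilde h| \le \frac{C}{\rho}\sup_{B_\rho}|\tilde h|.$$
Pulling back via the bi-Lipschitz $\Psi$, whose distortion depends only on the $C^{1,1}$ character of $\partial\Omega$, and choosing $\rho \sim r$ so that $\Psi^{-1}(B^+_\rho) \subset B(x_0,2r)\cap\Omega$, the claimed bound follows after combining with the interior case in an elementary covering.

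The technically delicate step, and the one that genuinely requires the $C^{1,1}$ (rather than merely $C^1$) hypothesis, is the cancellation $a^{nj}(z',0) = 0$ for $j<n$, which is what allows the Neumann condition to become a plain vertical derivative and the even reflection to yield a bona fide equation with Lipschitz coefficients. Without $\nabla d$ being Lipschitz up to the boundary, the transformed matrix is only continuous and the interior $W^{2,p}$ machinery invoked at the end is no longer available in the required form. All remaining ingredients — the covering reduction to a boundary ball, the harmonic Cauchy estimate, and the bi-Lipschitz rescaling — are standard.
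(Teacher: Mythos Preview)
Your proposal is correct and follows essentially the same strategy as the paper's own proof: flatten the boundary via a $C^{1,1}$ diffeomorphism (the paper cites \cite{adolfsson1997c1} rather than building $\Psi$ explicitly from the signed distance), obtain a divergence-form equation with Lipschitz coefficients on a half-ball with the Neumann condition on the flat part, extend by even reflection, and invoke interior gradient estimates before pulling back. Your account is in fact more explicit than the paper's about why the reflection produces a genuine weak solution with Lipschitz coefficients (the cancellation $a^{nj}(z',0)=0$ for $j<n$), a point the paper leaves implicit in its reference.
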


\begin{proof}
     By a standard $C^{1,1}$ coordinate transformation $\Psi$ (See e.g. \cite{adolfsson1997c1}), the Neumann problem in a neighborhood of $\partial\Omega$ can be transformed to the Neumann problem in a unit half ball $B_+=\{x\in\R^n:|x|<1,x''>0\}$. Let $v(x)=u\circ\Psi(x)$, we have $-\mathrm{div}(A\nabla v)= 0$ in $B_+$ and $\frac{\partial v}{\partial \nu}=0$ on the flat boundary, where $A=\mathrm{det}J\Psi J\Psi^{-T}J\Psi^{-1}$ and $J\Psi$ denotes the Jacobi matrix of $\Psi$. The fact that $\Psi$ is a $C^{1,1}$ transformation implies $A$ is Lipschitz. By extending $v$ to the whole ball symmetrically and the interior gradient estimates, we have for sufficiently small $r$,
    \begin{equation*}
        \sup_{\Psi^{-1}(B(x,r)\cap\overline{\Omega})} |\nabla v| \leq \frac{C}{r} \sup_{\Psi^{-1}(B(x,2r)\cap\Omega)} |v|,
    \end{equation*}
    where $C$ depends on the transformation $\Psi$ and thus on the $C^{1,1}$ character of $\partial\Omega$. Back to the original domain, since $\Psi$ is non-degenerate, we obtain the required estimate.
\end{proof}

The following smallness propagation lemma is from \cite{alessandrini2009stability}.

\begin{lemma}\label{propagation of smallness}
    Let $B_+$ be the half-ball, $B_+=\{ (x',x'')\in \R^{n-1}\times\R: |x'|^2+|x''|^2<1, x''>0 \}$, and $\Gamma$ be the flat part of $\partial B_+$, $\Gamma = \{ (x',x'')\in\overline{B_+}:x''=0 \}$. There exist $\gamma=\gamma(n)\in(0,1)$ and $C=C(n)>0$ such that if $h$ is harmonic in $B_+$, $h\in C^1(\overline{B_+})$ and satisfies the inequalities 
    \begin{equation*}
        |h|\leq 1, \ |\nabla h|\leq 1 \ \text{in } B_+, \quad |h|\leq \varepsilon, \ |\partial_n h|\leq \varepsilon \ \text{on } \Gamma,
    \end{equation*}
     then
     \begin{equation*}
         |h(x)| \leq C \varepsilon^\gamma \ \text{in } \frac{1}{3}B_+,
     \end{equation*}
    where $\frac{1}{3}B_+=\{ (x',x''):|x'|^2+|x''|^2=\frac{1}{9},x''>0 \}$.
\end{lemma}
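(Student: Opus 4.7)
The statement is a Hölder-type stability result for the Cauchy problem of the Laplace operator on the half-ball, and I would prove it by invoking the Carleman-based stability theory of the cited Alessandrini--Rondi--Rosset--Vessella paper.

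\textbf{Outline.} The key tool is a Carleman inequality for $\Delta$ on $B_+$ with a weight of the form $\phi(x)=-\log|x-x_0|$, where $x_0$ is a fixed point just outside $\overline{B_+}$ placed below $\Gamma$. Such a weight is pseudoconvex with respect to $\Delta$, is large on $\Gamma$ (because $x_0$ is close to $\Gamma$), and is comparatively small on the spherical part $S_+:=\partial B_+\setminus\Gamma$. Applying the Carleman inequality to $h$ (suitably cut off) gives a weighted estimate of the shape
\begin{equation*}
\tau\int_{B_+}e^{2\tau\phi}|h|^2\,dx\le C\int_{\partial B_+}e^{2\tau\phi}\bigl(|h|^2+|\nabla h|^2\bigr)\,dS,\qquad \tau\ge\tau_0.
\end{equation*}
Splitting the right-hand integral, the $\Gamma$-piece is bounded by $C\varepsilon^2 e^{2\tau\phi_\Gamma}$ with $\phi_\Gamma:=\sup_\Gamma\phi$, while the $S_+$-piece is bounded by $C e^{2\tau\phi_{S_+}}$ with $\phi_{S_+}:=\sup_{S_+}\phi<\phi_\Gamma$. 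Choosing $\tau$ to balance these contributions yields $\|h\|_{L^2(\frac{1}{3}B_+)}\le C\varepsilon^\gamma$ for some $\gamma=\gamma(n)\in(0,1)$; upgrading the $L^2$ bound to $L^\infty$ on $\frac{1}{3}B_+$ by an interior Moser / mean-value estimate (using $|h|,|\nabla h|\le 1$) completes the argument.

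\textbf{Main obstacle.} The technical heart is the construction of the correct weight and the balancing of $\tau$. The weight $-\log|x-x_0|$ is $\Delta$-pseudoconvex, but $x_0$ must be positioned so that $\sup_\Gamma\phi$ is strictly larger than $\sup_{S_+}\phi$; otherwise the exponent $\gamma$ degenerates to zero. Since $\frac{1}{3}B_+$ is well-separated from $S_+$, the geometry does allow such a choice, but quantifying this requires care. An attractive elementary alternative is to extend $h$ across $\Gamma$ by even reflection, $\tilde h(x',x''):=h(x',|x''|)$, for which a short distributional computation gives $\Delta\tilde h=2(\partial_n h)\,\sigma_\Gamma$ (surface measure of density $\le 2\varepsilon$); subtracting an $O(\varepsilon)$ single-layer potential $v$ produces a function $u:=\tilde h-v$ harmonic on $B$ with small Cauchy data on $\Gamma$, and one tries to conclude via Hadamard's three-sphere inequality. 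However this naive route runs into a genuine degeneracy: the slab bound $|u(x',x'')|\le C(\varepsilon+(x'')^2)$ (from $|u|,|\partial_n u|\le C\varepsilon$ on $\Gamma$ and the interior estimate $\|\nabla^2 u\|_\infty\le C$) together with the natural radius choice $r_1=\sqrt{\varepsilon}$ yields a Hadamard exponent $\alpha\sim 1/\log(1/\varepsilon)$ that tends to zero, giving only a constant bound. Extracting a genuine Hölder exponent therefore requires either iterated Taylor expansions exploiting the harmonic identity $\partial_n^2 u=-\Delta_{x'}u$ to push slab decay to arbitrary polynomial order, or --- as in the cited paper --- the Carleman route above.
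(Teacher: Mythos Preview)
The paper does not prove this lemma at all: it is stated as a quotation from \cite{alessandrini2009stability} and used as a black box, with no argument given. Your proposal to establish it via a Carleman estimate with a weight concentrated near $\Gamma$ is precisely the method of that cited reference, so in substance your approach and the paper's coincide.

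A few minor points on the sketch itself. First, in practice one does not integrate by parts to get a boundary integral over $S_+$; instead one applies the Carleman inequality to $\chi h$ with a cutoff $\chi$ supported in, say, $\tfrac{2}{3}B_+$ and equal to $1$ on $\tfrac{1}{2}B_+$. The ``large'' contribution then comes not from $S_+$ but from the annulus $\{\tfrac{1}{2}<|x|<\tfrac{2}{3}\}$ where $\nabla\chi$ lives, and the required geometric inequality is
\[
\inf_{\tfrac{1}{3}B_+}\phi \;>\; \sup_{\{\tfrac{1}{2}<|x|<\tfrac{2}{3}\}\cap B_+}\phi,
\]
together with finiteness of $\sup_{\Gamma\cap\tfrac{2}{3}B}\phi$. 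Second, with $x_0$ placed just below $\Gamma$, the weight $-\log|x-x_0|$ blows up at $x_0$, and since $\tfrac{1}{3}B_+$ touches $\Gamma$ the infimum above is not immediately separated from the $\Gamma$-supremum; one usually cures this either by placing $x_0$ slightly to the side or by using the convexified weight $e^{\lambda\psi}$ with $\psi(x)=-|x-x_0|^2$ (the standard H\"ormander choice), which is the route actually taken in \cite{alessandrini2009stability}. These are routine adjustments and do not affect the correctness of your plan.
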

    
\end{subsection}

\end{section}

\begin{section}{Doubling Index}\label{Doubling Index}

Let $h\in C(\overline{\Omega})$ be a non-zero harmonic function in a domain $\Omega\subset\R^n$. For each $x\in\overline{\Omega}$ and $r>0$, let
\begin{equation*}
    H_h(x,r) = \int_{B(x,r)\cap\Omega} h^2 \quad \text{and} \quad N_h(x,r)=\log\frac{H_h(x,2r)}{H_h(x,r)}.
\end{equation*}
The quantity $N_h(x,r)$ is called the doubling index of $h$ in $B(x,r)$. For $B(x,2R)\subset\Omega$, it is a well-known fact that for any $0<r<R$,
\begin{equation}\label{monotonicity of doubling interior}
    N_h(x,r)\leq N_h(x,R).
\end{equation}
 For an elementary proof, see, e.g., \cite{korevaar1994logarithmic}.
 
 For a cube $Q$ with diameter $l$ such that $Q\cap\Omega\neq\varnothing$, define the maximal doubling index of $h$ in $Q$ by
\begin{equation*}
    N^*_h(Q) = \sup_{x\in Q\cap\overline{\Omega},0<r\leq l} N_h(x,r).
\end{equation*}
By this definition, we have for any smaller cube $q\subset Q$, there holds $N^*_h(q)\leq N^*_h(Q)$.

The following lemma is called the almost monotonicity of the doubling index. Comparing to \eqref{monotonicity of doubling interior}, in the following statement, the doubling index near the boundary is considered.

\begin{lemma}\label{monotonicity of doubling}
    Let $\Omega$ be a $C^{1,1}$ domain in $\R^n$, $B=B(x_0,R)$ for some $x_0\in\partial\Omega$, $\Delta h=0$ in $\Omega\cap B$ and $\frac{\partial h}{\partial \nu} = 0$ on $\partial \Omega\cap B$, then
    \begin{equation*}
        N_h(x,r_1) \leq C (N_h(x,r_2)+1),
    \end{equation*}
    for any $x\in \overline{\Omega}\cap\frac{1}{4}B$ and $0<2r_1\leq r_2<R/16$, where $C$ depends on the $C^{1,1}$ character of $\partial\Omega\cap B$.
\end{lemma}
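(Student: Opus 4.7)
The plan is to reduce the boundary almost monotonicity to the classical interior almost monotonicity for solutions of divergence-form elliptic equations with Lipschitz coefficients, via a $C^{1,1}$ flattening of $\partial\Omega\cap B$ followed by even reflection across the flattened boundary. If $B(x,2r_2)\subset\Omega$, \eqref{monotonicity of doubling interior} already yields $N_h(x,r_1)\le N_h(x,r_2)$, so attention focuses on the case $\mathrm{dist}(x,\partial\Omega)<2r_2$. In that case the ball $B(x,2r_2)$ sits well inside $B=B(x_0,R)$ since $x\in\frac{1}{4}B$ and $r_2<R/16$, so the $C^{1,1}$ character of $\partial\Omega\cap B$ controls everything that follows.

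Choose a $C^{1,1}$ diffeomorphism $\Psi$ on a neighborhood of $\overline{\Omega}\cap\frac{1}{2}B$ as in \cite{adolfsson1997c1} that straightens $\partial\Omega$ to $\{y''=0\}$ and sends $\Omega$ to the upper side. The pulled-back function $v=h\circ\Psi$ satisfies $-\mathrm{div}(A\nabla v)=0$ for a Lipschitz matrix $A$, and $\Psi$ can be arranged so that $A_{in}|_{y''=0}=0$ for $i<n$; the Neumann condition then reduces to $\partial_n v=0$ on the flat boundary. Define the even reflection $\tilde v(y',y'')=v(y',|y''|)$. Because $\partial_n v$ vanishes on the reflecting plane, $\tilde v\in C^1$, and it solves $-\mathrm{div}(\tilde A\nabla\tilde v)=0$ on the full ball, where $\tilde A$ is obtained by even-reflecting the entries $A_{ij}$ with $i,j<n$ or $i=j=n$ and odd-reflecting the mixed entries $A_{in},A_{ni}$; the vanishing of those mixed entries on $\{y''=0\}$ is exactly what keeps $\tilde A$ Lipschitz across the reflection plane.

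With $\tilde v$ now a full-ball solution of a uniformly elliptic divergence-form equation with Lipschitz coefficients, invoke the Garofalo--Lin almost monotonicity of the frequency (and therefore of the doubling index) for such equations to obtain $N_{\tilde v}(y,\rho_1)\le C\bigl(N_{\tilde v}(y,\rho_2)+1\bigr)$ for $y=\Psi^{-1}(x)$ and $0<2\rho_1\le\rho_2$ in an admissible range. Pulling back through $\Psi$, which is bi-Lipschitz, the integral $\int_{B(y,\rho)}\tilde v^2$ is comparable, up to bounded multiplicative and radius-dilation factors, to $H_h(x,\rho')$ for a comparable radius $\rho'$: the integral of $\tilde v^2$ over a ball in $y$-coordinates splits via the reflection into a bounded multiple of the integral of $v^2$ over a half-ball, which is Jacobian-equivalent to $H_h$. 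Bookkeeping these comparabilities turns the inequality for $\tilde v$ into the claim for $h$, with $C$ absorbing the Lipschitz characters of $\Psi$ and $\tilde A$.

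The main obstacle is the reflection step. To apply Garofalo--Lin one needs genuinely Lipschitz coefficients on the full ball after reflection, and this forces the mixed entries $A_{in}$ to vanish on $\{y''=0\}$; producing a flattening $\Psi$ with this property while remaining $C^{1,1}$ is the essential content of the boundary straightening for $C^{1,1}$ domains. Weaker boundary regularity (say $C^1$) would yield only continuous $\tilde A$ after reflection, for which the Garofalo--Lin monotonicity no longer applies; this is why the hypothesis is $C^{1,1}$ rather than just $C^1$.
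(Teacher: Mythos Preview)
Your argument is correct and is in fact more detailed than what the paper provides: the paper does not prove Lemma~\ref{monotonicity of doubling} at all but simply cites \cite{zongyuan2023quantitative}, remarking that since the boundary condition here is Neumann rather than Robin, ``better monotonicity of the doubling index can be obtained.''

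Your route---$C^{1,1}$ flattening so that the conormal becomes $e_n$, even reflection across $\{y''=0\}$, then Garofalo--Lin frequency monotonicity for divergence-form operators with Lipschitz coefficients, and finally bi-Lipschitz bookkeeping to return to $H_h$---is exactly the standard mechanism that underlies such results, and indeed the paper itself uses the same flatten-and-reflect manoeuvre (citing \cite{adolfsson1997c1}) in its proof of Lemma~\ref{local gradient estimate}. Compared with the cited reference \cite{zongyuan2023quantitative}, which must handle the Robin condition and therefore cannot rely on a clean even reflection, your argument is the natural simplification available in the Neumann case; this is presumably what the authors have in mind when they say better monotonicity is available. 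The one point worth making explicit if you write this out in full is the construction of $\Psi$ with $A_{in}|_{\{y''=0\}}=0$: the naive vertical-shift flattening does not achieve this, and one should use the signed-distance (tubular-neighbourhood) coordinates from \cite{adolfsson1997c1}, which is where the $C^{1,1}$ hypothesis is genuinely consumed, as you correctly observe.
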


For the proof of Lemma \ref{monotonicity of doubling}, we refer the readers to \cite{zongyuan2023quantitative}, where the more general Robin problem is considered. Also notice that we consider the Laplace equation with Neumann boundary value in this lemma, so better monotonicity of the doubling index can be obtained and Lemma \ref{monotonicity of doubling} follows.

By this kind of almost monotonicity, a simple corollary is the three-ball estimate.

\begin{corollary}\label{three ball ineq}
    Let $\Omega$ be a $C^{1,1}$ domain in $\R^n$, $B=B(x_0,R)$ for some $x_0\in\partial\Omega$, $\Delta h=0$ in $\Omega\cap B$ and $\frac{\partial h}{\partial \nu} = 0$ on $\partial \Omega\cap B$, then
    \begin{equation*}
        \sup_{B(x,\frac{3}{2}r)\cap\Omega} |h| \leq C \left( \sup_{B(x,r)\cap\Omega} |h| \right)^{1-\delta} \left( \sup_{B(x,4r)\cap\Omega} |h| \right)^\delta,
    \end{equation*}
    for any $x\in \overline{\Omega}\cap\frac{1}{4}B$ and $0<2r_1\leq r_2<R/16$, where $C$ and $\delta$ depend on the $C^{1,1}$ character of $\partial\Omega\cap B$.
\end{corollary}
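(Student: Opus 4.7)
The plan is to reduce the $L^\infty$ three-ball inequality to an $L^2$ three-ball inequality, which follows almost immediately from the almost monotonicity of the doubling index in Lemma \ref{monotonicity of doubling}, and then convert back to sup norms using the local boundedness in Lemma \ref{local boundedness}.

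First I would apply Lemma \ref{monotonicity of doubling} with the pair of radii $(r,\,2r)$, assuming $r$ is small enough that $4r < R/16$ so the hypothesis is satisfied. Unpacking the definition $N_h(x,\rho)=\log\bigl(H_h(x,2\rho)/H_h(x,\rho)\bigr)$, the lemma reads
$$\log\frac{H_h(x,2r)}{H_h(x,r)} \;\leq\; C\left(\log\frac{H_h(x,4r)}{H_h(x,2r)}+1\right).$$
Writing $A=H_h(x,r)$, $B=H_h(x,2r)$, $D=H_h(x,4r)$ and exponentiating gives $B^{C+1}\leq e^{C}AD^{C}$, which rearranges to the $L^2$ three-ball estimate
$$H_h(x,2r) \;\leq\; C'\,H_h(x,r)^{1-\delta}\,H_h(x,4r)^{\delta}, \qquad \delta=\tfrac{C}{C+1}\in(0,1).$$

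Next I would upgrade this to a sup-norm statement. On the one hand, the trivial bounds
$$H_h(x,r)\leq |B(x,r)|\bigl(\sup_{B(x,r)\cap\Omega}|h|\bigr)^{2}\leq C r^{n}\bigl(\sup_{B(x,r)\cap\Omega}|h|\bigr)^{2},$$
together with the analogous estimate at radius $4r$, control $A$ and $D$ by the desired outer sup norms. On the other hand, covering $B(x,3r/2)\cap\overline{\Omega}$ by a uniformly bounded number of balls $B(y_i,r/4)$ with $y_i\in\overline{\Omega}$ and applying Lemma \ref{local boundedness} at each $y_i$ (note that $B(y_i,r/2)\subset B(x,2r)$) gives
$$\sup_{B(x,3r/2)\cap\Omega}|h|^{2}\;\leq\;\frac{C}{r^{n}}\,H_h(x,2r).$$
Substituting these three bounds into the $L^2$ three-ball inequality and taking square roots yields the claimed estimate with the same exponent $\delta$.

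The main obstacle has already been resolved in Lemma \ref{monotonicity of doubling}, whose proof for the Neumann problem on a $C^{1,1}$ domain is itself nontrivial. Given that result and Lemma \ref{local boundedness}, the present deduction is essentially algebraic bookkeeping; the only point requiring care is to keep all of the radii $r$, $2r$, $4r$ within the ranges of validity for the two lemmas, which is ensured by the smallness assumption implicit in the statement.
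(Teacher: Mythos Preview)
Your proposal is correct and follows essentially the same route as the paper: apply Lemma \ref{monotonicity of doubling} with $r_1=r$, $r_2=2r$, exponentiate to get the $L^2$ three-ball inequality $H_h(x,2r)\leq C\,H_h(x,r)^{1-\delta}H_h(x,4r)^{\delta}$, and then use Lemma \ref{local boundedness} (together with the trivial $L^\infty\!\to\!L^2$ bound) to convert to supremum norms. The paper's proof is just a two-line sketch of exactly these steps, so your more detailed write-up is a faithful expansion of it.
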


\begin{proof}
    In Lemma \ref{monotonicity of doubling}, take $r_1=r$ and $r_2=2r$ and take exponents from both sides, we obtain
    \begin{equation*}
        H_h(2r) \leq C H_h(r)^{1-\delta} H_h(4r)^\delta.
    \end{equation*}
    By Lemma \ref{local boundedness}, we obtain the similar estimate in terms of $\sup |h|$ as required.
\end{proof}

It is a standard way to extend the Laplace eigenfunction to a harmonic function. See Section \ref{Neumann Laplace Eigenfunctions} for details. In the following sections, we focus on the nodal sets of harmonic functions (with Neumann boundary value). For harmonic functions with Neumann boundary value, we claim the following estimate on the nodal set.

\begin{theorem}\label{nodal set bounds for harmonic functions}
    There exists absolute constants $\tau_n>0$ and $C>0$ such that the following holds. Let $\Omega$ be a bounded $C^{1,1}$ domain, $\partial\Omega\cap B(x,128r)\in Lip(\tau)$ for some $x\in\partial\Omega$ and $\tau<\tau_n$. Then
    \begin{equation*}
        \mathcal{H}^{n-1}(Z(h)\cap B(x,r)) \leq C(N_h(x,4r)+1)r^{n-1},
    \end{equation*}
    for any non-zero harmonic function $h\in C(\overline{\Omega})$ satisfying $\frac{\partial h}{\partial \nu}=0$ on $\partial\Omega\cap B(x,128r)$.
\end{theorem}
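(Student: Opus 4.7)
The plan is to follow the strategy of \cite{logunov2021sharp}, combining the combinatorial induction-on-scale argument from \cite{logunov2018nodalupper} with a boundary analysis that substitutes the Neumann approximation of Section \ref{Approximation for the Neumann Problem} for the odd reflection used in the Dirichlet case. First, I would reduce to a cubical version of the claim: after flattening $\partial\Omega\cap B(x,128r)$ via the isometry and Lipschitz graph of Definition \ref{def local tau Lip}, it suffices to bound $\mathcal{H}^{n-1}(Z(h)\cap Q)$ by $C(N^*_h(Q^\sharp)+1)r^{n-1}$ for a cube $Q$ of sidelength comparable to $r$ meeting $\partial\Omega$, where $Q^\sharp$ is a suitably enlarged cube whose maximal doubling index is controlled by $N_h(x,4r)$ through Lemma \ref{monotonicity of doubling}. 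The generous factor $128$ in the hypothesis is what allows a bounded number of applications of the almost monotonicity and three-ball estimates to relate the various radii.

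The main engine would then be a Logunov-type induction on scale. Fix a large integer $A$ to be chosen, and partition $Q$ into $A^n$ congruent sub-cubes $q$. Using almost monotonicity together with an additivity/simplex lemma for the doubling index (the analogue of the hyperplane lemma of \cite{logunov2018nodalupper}), one shows that, apart from a controlled number of exceptional sub-cubes, each $q$ satisfies $N^*_h(q) \leq N^*_h(Q)/2$. Iterating produces a hierarchy of scales on which the maximal doubling index decays geometrically while the volumetric cost grows only polynomially, and summing yields the desired linear dependence on $N_h(x,4r)$. For interior sub-cubes the contribution to $\mathcal{H}^{n-1}(Z(h)\cap q)$ is controlled by the classical Logunov polynomial upper bound for harmonic functions in terms of $N^*_h(q)$ and $\operatorname{diam}(q)^{n-1}$, which applies since $h$ is genuinely harmonic in a neighborhood of such $q$.

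The main obstacle is the boundary case. In \cite{logunov2021sharp} one reflects $h$ oddly across a (nearly) flat boundary to obtain a harmonic function on a full ball and then applies the interior bound; for Neumann data the natural even reflection fails because $\partial_n h$ does not vanish after the $\tau$-Lipschitz flattening. To overcome this, I would use the approximation $\tilde h$ from Section \ref{Approximation for the Neumann Problem}: $\tilde h$ satisfies an exact Neumann condition on a flat piece of boundary, hence its even extension is harmonic on a full ball and the interior nodal bound applies to $\tilde h$. The difference $h-\tilde h$ is controlled quantitatively by Lemma \ref{propagation of smallness} together with Corollary \ref{three ball ineq}, so $Z(h)$ and $Z(\tilde h)$ can be compared on boundary cubes at the price of a uniformly bounded multiplicative constant. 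The smallness assumption $\tau<\tau_n$ is what ensures that the flattening map, the approximation error, and the reflection defect are all subordinate to the main term and that the constant does not blow up in the iteration. Assembling interior and boundary contributions across the dyadic partition closes the induction and produces the stated bound $C(N_h(x,4r)+1)r^{n-1}$.
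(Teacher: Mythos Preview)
Your reduction to cubes and the induction-on-scale skeleton are correct, but the mechanism you propose differs from the paper's in two places, and one of them is a genuine gap.

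The serious problem is your treatment of boundary cubes. You propose to bound $\mathcal{H}^{n-1}(Z(h)\cap q)$ by even-reflecting the approximant $\tilde h$ of Section~\ref{Approximation for the Neumann Problem}, applying the interior nodal bound to $\tilde h$, and then transferring the estimate to $h$ because $\|h-\tilde h\|_{L^\infty}\le C\tau$. But $L^\infty$-closeness of two harmonic functions gives no control on the Hausdorff measure of their nodal sets: a small perturbation can create or destroy large pieces of $Z(\cdot)$, and neither Lemma~\ref{propagation of smallness} nor Corollary~\ref{three ball ineq} yields such a comparison. The paper never attempts this. Its sole use of the approximation is inside Lemma~\ref{cubes with small doubling}: when $N^*_h(Q)\le N$, the approximant $g$ is shown (via even reflection and Lemma~\ref{lemma for a half ball}) to satisfy $|g|\ge c_0(N)$ on some ball touching the flat boundary, and since $|g-h|<C\tau$ with $\tau=\tau(N)$ chosen small, $h$ itself is nonzero on a sub-cube $q_0\in\mathcal{B}_k(Q)$. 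That is far weaker than a nodal-set comparison and is all the induction needs.

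The induction engine is also different. You invoke a hyperplane/simplex lemma \`a la \cite{logunov2018nodalupper} to make the doubling index drop in all but finitely many sub-cubes; no such boundary version is proved here. The paper instead establishes a dichotomy: if $N^*_h(Q)>N_0$ then Lemma~\ref{cubes with large doubling} (proved by contradiction via propagation of smallness, \emph{not} via the approximation) yields one $q_0\in\mathcal{B}_k(Q)$ with $N^*_h(q_0)\le N^*_h(Q)/2$; if $N^*_h(Q)\le N_0$ then Lemma~\ref{cubes with small doubling} yields one $q_0$ with $Z(h)\cap q_0=\varnothing$. A single good boundary cube suffices: the remaining $2^{k(n-1)}-1$ boundary cubes carry only $N^{**}_h(q)\le N^{**}_h(Q)$, interior cubes are handled by the linear Donnelly--Fefferman bound (Lemma~\ref{nodal set bounds for interior balls}, not Logunov's polynomial bound), and the recursion closes with the constant $C_0$ chosen so that $C_1+\bigl(1-\tfrac{1}{2}\cdot 2^{-k(n-1)}\bigr)C_0<C_0$.
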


The fact that Theorem \ref{nodal set bounds for harmonic functions} implies Theorem \ref{main thm} will be shown in Section \ref{Neumann Laplace Eigenfunctions}. Before that, we are devoted to proving Theorem \ref{nodal set bounds for harmonic functions}. We also need the estimate for the zero set inside the domain, which was proved by Donnelly and Fefferman in \cite{donnelly1988nodal}.

\begin{lemma}\label{nodal set bounds for interior balls}
    Let $h$ be a non-zero harmonic function in $\Omega\subset\R^n$. There exists $C$ such that
    \begin{equation*}
        \mathcal{H}^{n-1}(Z(h)\cap B) \leq C(N_h(x,4r)+1)r^{n-1},
    \end{equation*}
    for any ball $B=B(x,r)$ satisfying $\overline{B(x,8r)}\subset\Omega$.
\end{lemma}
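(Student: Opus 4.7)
The plan is to carry out the classical Donnelly--Fefferman argument for interior balls, viewing the doubling index as a bound on the "frequency" of $h$ and then reducing the measure of the nodal set to a count of real zeros on generic line segments. By translating and rescaling, I may assume $x=0$ and $r=1$, so that $\overline{B(0,8)}\subset\Omega$ and I must show
\[
  \mathcal{H}^{n-1}\bigl(Z(h)\cap B(0,1)\bigr)\leq C(N+1), \qquad N:=N_h(0,4).
\]

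First I would convert the doubling hypothesis into a pointwise growth bound. Using the interior monotonicity \eqref{monotonicity of doubling interior} and iterating the definition of the doubling index at dyadic scales, one obtains $H_h(0,2^k)\leq e^{kN}H_h(0,1)$ for all $k$ with $2^k\leq 8$, which via the mean-value property for harmonic functions yields a bound of the form $\sup_{B(0,2)}|h|\leq e^{CN}\sup_{B(0,1)}|h|$. The corresponding lower bound in $B(0,1)$ follows from the maximum principle together with the doubling condition, so after normalizing $\sup_{B(0,4)}|h|=1$ we gain a quantitative control $\inf_{B(0,1)}|h|\geq e^{-CN}$ in the averaged sense.

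Next I would extend $h$ to a holomorphic function $\tilde h$ on a complex neighborhood of $B(0,1)\subset \mathbb{C}^n$ using real analyticity, and track that the same growth estimate $\sup|\tilde h|\leq e^{CN}$ persists on, say, a polydisk of radius $c$ in each complex direction. Then, for a fixed direction $\omega\in S^{n-1}$ and an affine line $\ell_{\omega,a}=\{a+t\omega: t\in\mathbb{R}\}$ through $B(0,1)$, the restriction $t\mapsto\tilde h(a+t\omega)$ is holomorphic on a disk in $\mathbb{C}$ of controlled radius, with values bounded by $e^{CN}$. Jensen's formula, combined with the averaged lower bound from the first step applied to a positive-measure subset of lines, then implies that the number of real zeros of $h$ on $\ell_{\omega,a}\cap B(0,1)$ is at most $C(N+1)$ for $\mathcal{H}^{n-1}$-a.e.\ $a$ in the hyperplane orthogonal to $\omega$.

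Finally I would apply the integral-geometric (Cauchy--Crofton) formula
\[
  \mathcal{H}^{n-1}\bigl(Z(h)\cap B(0,1)\bigr)\leq C_n \int_{S^{n-1}}\!\!\int_{\omega^{\perp}}\#\bigl(Z(h)\cap\ell_{\omega,a}\cap B(0,1)\bigr)\,d\mathcal{H}^{n-1}(a)\,d\sigma(\omega),
\]
and insert the per-line estimate to conclude. Rescaling back to the original radius $r$ produces the factor $r^{n-1}$.

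The main obstacle is the second paragraph: transferring the real doubling bound into a quantitative holomorphic extension with matching sup-norm growth, and then extracting from Jensen's formula a pointwise-in-$a$ zero count rather than only an averaged one. This requires a careful lower bound for $h$ on a substantial proportion of parallel lines, which is obtained by applying the doubling/growth estimate to a slightly larger ball and a Chebyshev-type selection of "good" lines. Once this complex-analytic step is set up, the remaining pieces (monotonicity, Jensen, integral geometry) assemble directly into the claimed linear-in-$N$ bound.
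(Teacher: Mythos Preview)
The paper does not supply its own proof of this lemma; it simply states the result and attributes it to Donnelly and Fefferman \cite{donnelly1988nodal}. Your sketch is exactly the classical Donnelly--Fefferman argument (doubling $\Rightarrow$ controlled holomorphic extension $\Rightarrow$ Jensen's zero count on complex lines $\Rightarrow$ integral-geometric formula), so you are reconstructing the cited reference rather than diverging from the paper's approach.
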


\end{section}

\begin{section}{Approximation for the Neumann Problem}\label{Approximation for the Neumann Problem}

In this section, we introduce how to approximate the Neumann problem near the $C^{1,1}$ boundary by a mixed boundary value problem in a regular domain, in fact, a cylinder.

For any $x\in\R^n$, we use the notation $x=(x',x'')\in\R^{n-1}\times\R$ where $x'$ is the first $n-1$ coordinates of $x$ and $x''$ is the last coordinate of $x$. We also let $S(x,r,h):=\{y\in\R^n:|y'-x'|<r,0<y''-x''<h\}$ denote the cylinder with radius $r$ and height $h$ whose lower surface is centred at $x$.

\begin{definition}
    Let $\Omega$ be a bounded $C^{1,1}$ domain, $x_0\in\partial\Omega$ and $\partial\Omega\cap B(x_0,10)\in Lip(\tau)$. Choose the coordinate as in Remark \ref{choice of the coordinate} so $x_0=0$. Let $x_1=x_0-3\tau e_n$ and $S=S(x_1,1,1)$. For $\tau<1/100$, we have $\Omega\cap S\neq\varnothing$, the upper surface of $S$ is contained in $\Omega$ and the lower surface of $S$ is contained in $\R^n\setminus\Omega$. We call the triple $(x_1,S,\Omega\cap S)$ \textit{the standard approximation} near $x_0$. We remark that this approximation depends on $\tau$.
\end{definition}

We have introduced how to find a cylinder to approximate to a $\tau$-Lipschitz domain in a neighborhood of the boundary. Next we find a solution to the Neumann problem in the cylinder to approximate to the solution to the Neumann problem in the neighborhood.

\begin{theorem}\label{thm approximation}
    Let $\Omega$ be a bounded $C^{1,1}$ domain, $x_0\in\partial\Omega$ and $\partial\Omega\cap B(x_0,10)\in Lip(\tau)$ with $\tau<1/100$. Also let $h\in C(\overline{\Omega})$ be normalized as $\sup_{\Omega\cap B(x_0,3)}|h|=1$ and
    \begin{equation*}
        \left\{
        \begin{aligned}
            & \Delta h = 0 \ \text{in}\ \Omega \\
            & \frac{\partial h}{\partial \nu} = 0 \ \text{on}\ \partial\Omega\cap B(x_0,10) \\
        \end{aligned}
        \right.
    \end{equation*}
    Consider the standard approximation $(x_1,S,\Omega\cap S)$ near $x_0$. Then there exists $g\in C(\overline{S})$ such that
    \begin{equation*}
        \left\{
        \begin{aligned}
            & \Delta g = 0 \ \text{in}\ S \\
            & \frac{\partial g}{\partial e_n} = 0 \ \text{on}\ \Gamma_- \\
        \end{aligned}
        \right.
    \end{equation*}
    where $\Gamma_-:=\{(x',x'')\in \overline{S(x_1,1,1)}: x''=x_1''\}$ is the lower surface of $S$, and
    \begin{equation*}
        \sup_{\Omega\cap S} |g-h| < C\tau, \quad \sup_S |g| = \sup_{\Omega\cap S}|h|.
    \end{equation*}
\end{theorem}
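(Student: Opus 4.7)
The plan is to construct $g$ by solving a mixed Dirichlet--Neumann problem on the cylinder $S$, with zero Neumann data on $\Gamma_-$ and Dirichlet data extending $h$ on $\partial S\setminus\Gamma_-$, and then to compare $g$ with $h$ on $A:=\Omega\cap S$ via the maximum principle. In coordinates as in Remark \ref{choice of the coordinate}, $\partial\Omega\cap B(x_0,10)$ is the graph $\{x''=f(x')\}$ with $f(0)=0$ and $\mathrm{Lip}(f)\leq\tau$, so $A=\{(x',x''):|x'|<1,\ f(x')<x''<1-3\tau\}$ and $\partial\Omega\cap S$ lies in the slab $\{|x''|\leq\tau\}$, separated from $\Gamma_-=\{x''=-3\tau\}$ by at least $2\tau$.

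First I would define Dirichlet data $\Phi$ on $\partial S\setminus\Gamma_-$ by setting $\Phi=h$ on $\partial S\cap\overline{\Omega}$ (the top face $\Gamma_+$ and the part of the lateral face with $x''>f(x')$) and $\Phi(x',x''):=h(x',f(x'))$ on the thin lateral strip $\{|x'|=1,\ -3\tau<x''\leq f(x')\}$ that lies outside $\Omega$. This makes $\Phi$ continuous on $\partial S\setminus\Gamma_-$ with $\|\Phi\|_\infty\leq\sup_{A}|h|$. Let $g\in C(\overline S)$ be the unique solution of $\Delta g=0$ in $S$, $\partial g/\partial e_n=0$ on $\Gamma_-$, $g=\Phi$ on $\partial S\setminus\Gamma_-$; existence and continuity up to the boundary follow from reflection across $\Gamma_-$, which reduces the problem to a Dirichlet problem on the doubled (Lipschitz) cylinder with symmetric continuous data. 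The maximum principle then gives $\sup_S|g|\leq\sup_A|h|$; the equality $\sup_S|g|=\sup_A|h|$ can be enforced at the end by a rescaling of factor $1+O(\tau)$, which is harmless in view of the error estimate below.

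To bound $w:=g-h$ on $A$, note that $w$ is harmonic with $w=0$ on $\partial A\cap\partial S$ by construction of $\Phi$. The key quantitative step is on $\partial A\cap\partial\Omega$, where $\partial h/\partial\nu=0$ but the Neumann condition for $g$ is imposed on the different surface $\Gamma_-$. To transfer the information, I reflect $g$ across $\Gamma_-$ to obtain a harmonic function $\tilde g$ on $S\cup\Gamma_-\cup S^{\mathrm{refl}}$. Since $\partial\Omega\cap S$ lies in the interior of this enlarged domain, interior derivative bounds for harmonic functions yield that $\nabla(\partial_{e_n}\tilde g)$ is uniformly bounded by a constant times $\sup|g|\leq 1$ in a neighborhood of $\partial\Omega\cap S$; combined with $\partial_{e_n}\tilde g=0$ on $\Gamma_-$ and $\mathrm{dist}(\partial\Omega\cap S,\Gamma_-)\leq 4\tau$, this gives $|\partial_{e_n}g|\leq C\tau$ on $\partial\Omega\cap S$. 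The $C^{1,1}$ hypothesis implies $\nu-(-e_n)=O(\tau)$, so $|\partial_\nu w|=|\partial_\nu g|\leq C\tau$ on $\partial\Omega\cap S$. I would close the estimate with the harmonic barrier $\phi(x):=C_0\tau(1-3\tau-x'')$, which is nonnegative on $\overline A$, vanishes on $\Gamma_+$, and satisfies $\partial_\nu\phi\geq C_0\tau/2$ on $\partial\Omega\cap S$ because $\nu\cdot e_n$ is close to $-1$. Applying the maximum principle together with the Hopf lemma to $\phi\pm w$ in $A$, with $C_0$ chosen large enough to dominate the Neumann bound for $w$, yields $|w|\leq\phi\leq C\tau$ in $A$.

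The main obstacle is exactly that the Neumann conditions for $g$ and $h$ sit on different surfaces ($\Gamma_-$ and $\partial\Omega$), and the argument has to quantify how well the condition on the flat surface propagates to the rough one. The $C^{1,1}$ regularity plays a double role: it pins $\nu$ within $O(\tau)$ of $-e_n$, and it enables the interior regularity for the reflected function $\tilde g$ that produces the sharp bound $|\partial_{e_n}g|\leq C\tau$ in the thin strip between $\Gamma_-$ and $\partial\Omega$. The delicate choice of $\Phi$ on the thin lateral strip outside $\Omega$, namely $\Phi(x',x'')=h(x',f(x'))$, is what keeps the construction both continuous across the corner where $\partial\Omega$ meets $\partial S$ and bounded by $\sup_A|h|$ without loss.
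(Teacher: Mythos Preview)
Your overall architecture matches the paper's: define $g$ by a mixed problem on $S$ with zero Neumann data on $\Gamma_-$ and data inherited from $h$ on the rest of $\partial S$, then bound $w=g-h$ on $\Omega\cap S$ by a barrier/maximum-principle argument, the crucial input being a bound of size $O(\tau)$ for the vertical derivative of $g$ along $\partial\Omega\cap S$. However, the step where you obtain $|\partial_{e_n}g|\le C\tau$ on $\partial\Omega\cap S$ has a genuine gap.

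After the even reflection across $\Gamma_-$, the function $\tilde g$ is harmonic only in the doubled cylinder $\{|x'|<1\}\times(-1,1)$ (in the shifted coordinates), and the set $\partial\Omega\cap S$ is a graph over the \emph{full} disk $\{|x'|<1\}$. Hence points of $\partial\Omega\cap S$ can be arbitrarily close to the lateral wall $\{|x'|=1\}$, where interior Cauchy estimates for $\nabla(\partial_{e_n}\tilde g)$ blow up like the reciprocal of the distance. Your claimed uniform bound on $\nabla(\partial_{e_n}\tilde g)$ ``in a neighborhood of $\partial\Omega\cap S$'' therefore does not follow from interior estimates, and consequently neither does $|\partial_{e_n}g|\le C\tau$ on all of $\partial\Omega\cap S$. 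The same difficulty affects the implicit step $|\partial_\nu g|\le |\partial_{e_n}g|+C\tau|\nabla g|$, since you also need $|\nabla g|$ bounded uniformly up to $|x'|=1$. Because your barrier $\phi\pm w$ requires the Neumann inequality on the \emph{entire} piece $\partial\Omega\cap S$, the argument cannot be rescued by shrinking to a subcylinder.

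This is exactly the obstruction the paper's proof is engineered to avoid. Two features of the paper's construction are essential and differ from yours. First, on $\Gamma_+$ the paper imposes the \emph{Neumann} condition $\partial_{e_n}g=\partial_{e_n}h$ rather than Dirichlet; together with the tangential identity $\partial_{e_n}g=\partial_{e_n}\tilde h$ on $\Sigma$ (which is $\partial_{e_n}h$ on $\Sigma\cap\Omega$ and $0$ on $\Sigma\setminus\Omega$) and $\partial_{e_n}g=0$ on $\Gamma_-$, this pins down the boundary values of the harmonic function $\partial_{e_n}g$ on \emph{all} of $\partial S$. Second, the paper builds a single harmonic barrier $\bar\psi$ on a ball $B_2\supset S$ (not on the cylinder), shows $\bar\psi\ge|\partial_{e_n}h|$ in $\Omega\cap B_2$ by the maximum principle, deduces $\bar\psi\ge|\partial_{e_n}g|$ on $\partial S$ and hence in $S$, and finally uses that $\bar\psi$ grows linearly in $x''$ from $\Gamma_-$ to conclude $|\partial_{e_n}g|\le C\tau$ on $\partial\Omega\cap S$, uniformly up to the lateral wall. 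Your Dirichlet choice on $\Gamma_+$ loses control of $\partial_{e_n}g$ there and blocks this global comparison; some device of this kind (or an equivalent boundary estimate valid up to $\Sigma$) is needed to close the proof.
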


To prove this theorem, we need the following quantitative uniqueness of a mixed valued problem in a domain close to a cylinder.

\begin{proposition}\label{quantitative uniqueness of a mixed boundary value problem}
    Let $S=S(0,1,1)\subset\R^n$ be a cylinder, $\Sigma=\{(x',x''))\in\partial S:0<x''<1\}$ be its side surface, $\Gamma_+=\{(x',x''))\in\partial S:x''=1\}$ be its upper surface, $f:\R^{n-1}\to\R$ be a function such that $0<f(x')<1/3$ for any $x'\in B^{n-1}(0,1)$, $\Omega=\{(x',x'')\in\R^n:f(x')>x''\}$ and $w\in C(\overline{S\cap\Omega})$ be a solution to the following problem:    
    \begin{equation*}
        \left\{
        \begin{aligned}
            & \Delta w = 0 \ \text{in}\ S\cap\Omega \\
            & w=0 \ \text{on}\ \Sigma\cap\Omega \\
            & \left| \frac{\partial w}{\partial e_n} \right| < \delta \ \text{on}\ \Gamma_+\cup(\partial\Omega\cap S)\\
        \end{aligned}
        \right.
    \end{equation*}
    Then
    \begin{equation*}
        \sup_{\Omega\cap S}|w|<C(n)\delta.
    \end{equation*}
\end{proposition}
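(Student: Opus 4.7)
The strategy is to exploit the fact that $u:=\partial_n w$ is itself harmonic in $S\cap\Omega$ and satisfies a Dirichlet-type bound of size $\delta$ on the whole boundary, and then to recover $w$ from the resulting pointwise bound on $\partial_n w$ together with a Poisson estimate on a single horizontal slice.

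First, since $w\equiv 0$ on the cylindrical side $\Sigma\cap\Omega$ and $e_n$ is tangent to $\Sigma$, the tangential derivative $\partial_n w$ vanishes there. Combined with the hypothesis $|\partial_n w|<\delta$ on $\Gamma_+\cup(\partial\Omega\cap S)$, the maximum principle applied to the harmonic function $u=\partial_n w$ yields $|\partial_n w|\le \delta$ throughout $\overline{S\cap\Omega}$. Because $f<1/3$, the horizontal slice $\{x''=1/2\}$ is contained in $S\cap\Omega$ and coincides with the disk $B^{n-1}(0,1)\times\{1/2\}$, and every vertical segment from a point of $S\cap\Omega$ to this slice lies inside $\overline{S\cap\Omega}$; integrating the bound on $|\partial_n w|$ along such segments gives
\begin{equation*}
\bigl|w(x',x'')-w(x',\tfrac12)\bigr|\le\tfrac{\delta}{2}\qquad\text{for all }(x',x'')\in S\cap\Omega.
\end{equation*}
It therefore suffices to prove $\|v\|_{L^\infty(B^{n-1}(0,1))}\le C(n)\delta$ for $v(x'):=w(x',\tfrac12)$.

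Continuity of $w$ up to $\Sigma$ gives $v=0$ on $\partial B^{n-1}(0,1)$, and $\Delta w=0$ yields $\Delta_{x'}v(x')=-\partial_n^2 w(x',\tfrac12)$. The crux of the argument is then the bound $\|\partial_n^2 w(\cdot,\tfrac12)\|_{L^\infty(B^{n-1}(0,1))}\le C(n)\delta$. Since $u=\partial_n w$ is harmonic, bounded by $\delta$, and vanishes on the smooth piece $\Sigma$, interior gradient estimates handle the region $\{|x'|\le 5/6\}$ (the slice sits at distance at least $1/6$ from $\Gamma_+$ and from the graph $\partial\Omega\cap S$), while standard boundary regularity for harmonic functions with zero Dirichlet data on a smooth surface (local flattening of $\Sigma$ and odd reflection, followed by an interior gradient estimate) gives $|\nabla u|\le C(n)\delta$ uniformly in a neighbourhood of $\Sigma$. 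Crucially, the working slice $x''=\tfrac12$ is separated from the corners where $\Sigma$ meets $\Gamma_+$ or the graph by a universal distance, so no corner regularity is needed. We deduce $|\Delta_{x'}v|\le C(n)\delta$ on $B^{n-1}(0,1)$, and the standard $L^\infty$-estimate for the Poisson equation on the unit disk with zero Dirichlet data then yields $\|v\|_{L^\infty}\le C(n)\delta$, completing the proof.

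The main obstacle I anticipate is the uniform boundary gradient estimate for $u$ up to $\Sigma$, but this is made routine by the observations that $\Sigma$ is a smooth cylindrical surface and that the chosen slice is separated from all corners and from the graph $\partial\Omega\cap S$ by a universal distance depending only on $n$; in particular, no assumption on the regularity of $f$ beyond its boundedness is used anywhere in the argument.
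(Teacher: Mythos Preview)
Your argument is correct and follows a genuinely different path from the paper's. The paper proves the contrapositive via a barrier: normalizing $\sup|w|=1$, it scales the explicit harmonic function $\varphi(x)=(n-1)|x''-2/3|^2-|x'|^2+1$ until $a_0\varphi$ touches $w$ from above at some point $y\in\Gamma_+\cup(\partial\Omega\cap S)$, and reads off $|\partial_n w(y)|\ge c(n)$ from the comparison of $e_n$-derivatives at the touching point (using that $|y''-2/3|\ge 1/3$ on both $\Gamma_+$ and the graph). Your route---maximum principle for the harmonic function $u=\partial_n w$, vertical integration down to a fixed slice, then a Poisson estimate for $v$ on that slice---is more analytic and even yields the stronger intermediate statement $|\partial_n w|\le\delta$ throughout $S\cap\Omega$, which the barrier argument does not give.

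One minor caveat is worth flagging: invoking the maximum principle for $u=\partial_n w$ tacitly assumes $u$ is bounded (or at least does not blow up too fast) near the edges where $\Sigma$ meets $\Gamma_+$ or the graph. This is not automatic from the bare hypothesis $w\in C(\overline{S\cap\Omega})$. In the paper's intended application one has $C^{1,1}$ data and the issue evaporates; in general the edge set is $(n-2)$-dimensional hence polar, and a Phragm\'en--Lindel\"of argument (together with the crude bound $|\nabla w(x)|\le C\sup|w|/\operatorname{dist}(x,\partial(S\cap\Omega))$) suffices. The paper's touching-point method sidesteps this entirely, since it compares the merely continuous function $w$ with a smooth barrier and uses $\partial_n w$ only at a single boundary point.
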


\begin{proof}
    By normalization, it is sufficient to show that for any solution to the following problem:
    \begin{equation*}
        \left\{
        \begin{aligned}
            & \Delta w = 0 \ \text{in}\ S\cap\Omega \\
            & w=0 \ \text{on}\ \Sigma\cap\Omega \\
            & \sup_{\Omega\cap S} |w| = 1\\
        \end{aligned}
        \right.
    \end{equation*}
    there holds
    \begin{equation}\label{ineq1}
        \sup_{\Gamma_+\cup(\partial\Omega\cap S)} \left| \frac{\partial w}{\partial e_n} \right| \geq c(n).
    \end{equation}
    By maximum principle, $\sup_{\Omega\cap S}|w|$ can only be attained at some $\overline{x}\in\Gamma_+\cup(\partial\Omega\cap S)$. Without loss of generality, we assume $w(\overline{x})>0$ and thus $\sup_{\Gamma_+\cup(\partial\Omega\cap S)} w = \sup_{\Omega\cap S} |w|=1$.
    
    Let $\varphi(x)$ be the harmonic function
    \begin{equation*}
        \varphi(x)=(n-1)|x''-2/3|^2-|x'|^2+1,
    \end{equation*}
    then we have $\varphi\geq0$ in $\overline{S}$. Also let $\varphi_a(x)=a\cdot\varphi(x)$ for $a>0$. Let $a_0=\inf\{a>0:\varphi_a(x)\geq w(x) \text{ for } x\in \Gamma_+\cup(\partial\Omega\cap S)\}$. Since $\inf_{\Gamma_+\cup(\partial\Omega\cap S)} \varphi>0$ and $\sup_{\Gamma_+\cup(\partial\Omega\cap S)} w = 1$, the infimum exists. For the maximum point $\overline{x}$, we also have $\varphi_{a_0}(\overline{x})\geq w(\overline{x})$, which implies
    \begin{equation*}
        a_0\geq \frac{1}{(n-1)|\overline{x}''-2/3|^2-|\overline{x}'|^2+1}.
    \end{equation*}
    Since $\overline{x}\in\Gamma_+\cup(\partial\Omega\cap S)$, we have $|\overline{x}''-2/3|\leq 1$. Therefore $a_0\geq c(n)>0$. We have $\varphi_{a_0}(x)\geq w(x)$ on $\Gamma_+\cup(\partial\Omega\cap S)$ and $\varphi_{a_0}(x)\geq 0=w(x)$ on $\Sigma\cap\Omega$, by the maximum principle, $\varphi_{a_0}(x)\geq w(x)$ in $S\cap\Omega$. By the definition of $a_0$, there exists $y\in\Gamma_+\cup(\partial\Omega\cap S)$ such that $w(y)=\varphi_{a_0}(y)$. If $y\in\Gamma_+$, then 
    \begin{equation*}
        \frac{\partial w}{\partial e_n}(y)\geq\frac{\partial \varphi_{a_0}}{\partial e_n}(y)=a_0\cdot\frac{\partial \varphi}{\partial e_n}(y)=a_0\cdot 2(n-1)(y''-2/3)\geq c(n)>0,
    \end{equation*}
    since $y''-2/3=1/3$ for $y\in\Gamma_+$. If $y\in\partial\Omega\cap S$, then 
    \begin{equation*}
        \frac{\partial w}{\partial e_n}(y)\leq\frac{\partial \varphi_{a_0}}{\partial e_n}(y)=a_0\cdot\frac{\partial \varphi}{\partial e_n}(y)=a_0\cdot 2(n-1)(y''-2/3)\leq -c(n)<0,
    \end{equation*}
    since $y''-2/3\leq -1/3$ for $y\in\partial\Omega\cap S$. In both cases, we have
    \begin{equation*}
        \left| \frac{\partial w}{\partial e_n}(y) \right|\geq c(n)>0.
    \end{equation*}
    Then \eqref{ineq1} follows.
\end{proof}

Now we are ready to prove Theorem \ref{thm approximation}.

\begin{proof}[Proof of Theorem \ref{thm approximation}]
    For simplicity, we can choose the coordinate such that $x_1$ is the origin. Firstly, we extend $h$ to $\Sigma:=\{(x',x'')\in\partial S:0<x''<1\}$ continuously in the following way. For any $x\in\Sigma\cap\Omega$, define $\tilde{h}(x):=h(x)$. For any $(x',x'')\in\Sigma\setminus\Omega$, let $y_x''\in\R$ be the number such that $(x',y_x'')\in\Sigma\cap\partial\Omega$ and define $\tilde{h}(x',x''):=h(x',y_x'')$. Now we have $\tilde{h}\in C(\Sigma)$. Let $\Gamma_+:=\{(x',x'')\in\partial S:x''=1\}$ be the upper surface of $S$. Since $\Gamma_+\subset\Omega$, we have $h\in C^\infty(\Gamma_+)$. Let $g$ be the solution to the following problem (See \cite{lieberman1986mixed} for the existence of the solution):
    \begin{equation}\label{problem1}
        \left\{
        \begin{aligned}
            & \Delta g = 0 \ \text{in}\ S \\
            & g=\tilde{h} \ \text{on}\ \Sigma \\
            & \frac{\partial g}{\partial e_n} = \frac{\partial h}{\partial e_n} \ \text{on}\ \Gamma_+ \\
            & \frac{\partial g}{\partial e_n} = 0 \ \text{on}\ \Gamma_- \\
        \end{aligned}
        \right.
    \end{equation}
    where $\Gamma_-:=\{(x',x'')\in\partial S:x''=0\}$ is the lower surface of $S$. Let $w=g-h\in C(\overline{S\cap\Omega})$, then we have $\Delta w=0$ in $S\cap\Omega$, $w=0$ on $\Sigma\cap\Omega$ and $\frac{\partial w}{\partial e_n}=0$ on $\Gamma_+$.
    
    We want to use Lemma \ref{quantitative uniqueness of a mixed boundary value problem} to show that $\sup_{\Omega\cap S}|w|<C\tau$. Check the conditions, then it remains to show that $\left| \frac{\partial w}{\partial e_n} \right|<C\tau$ on $\partial\Omega\cap S$.

    Since we have normalized $\sup_{\Omega\cap B(x_0,3)}|h|=1$, 
    \begin{equation*}
        \sup_{\overline{\Omega}\cap B(0,2)}|\nabla h| \leq \sup_{\overline{\Omega}\cap B(x_0,2+3\tau)}|\nabla h| \leq \sup_{\overline{\Omega}\cap B(x_0,5/2)}|\nabla h| \leq C_1
    \end{equation*}
    by Lemma \ref{local gradient estimate} and $\tau<1/100$. On $\partial\Omega\cap B(0,2)$, we have
    \begin{equation*}
        \left|\frac{\partial h}{\partial e_n}\right| = |\nabla h \cdot (-e_n)| \leq |\nabla h\cdot \nu| + |\nabla h\cdot (-e_n-\nu)| \leq C |-e_n-\nu| \leq C_2 \tau,
    \end{equation*}
    since $\frac{\partial h}{\partial \nu}=0$ on $\partial\Omega\cap B(0,2)$ and $\partial\Omega\cap B(0,2)\in C^{1,1}(\tau)$ thus the unit vectors $-e_n$ and $\nu$ are close in terms of $\tau$.

    Let $\psi(x)$ be the harmonic function on $B_2=B(0,2)$ such that $\psi(x)=1$ on $\partial B_{2,+}:=\{(x',x'')\in \partial B_2: x''>0\}$ and $\psi(x)=-1$ on $\partial B_{2,-}:=\{(x',x'')\in \partial B_2: x''<0\}$. Obviously, $\psi(x)=0$ on $\{x''=0\}$. Define the linear function $l(x',x'')=x''/2$. By the maximum principle, we have $\psi(x)\geq l(x)$ in $B_{2,+}:=\{(x',x'')\in B_2:x''>0\}$. For any $(x',x'')\in\partial\Omega\cap B_2$, we have $x''\geq |x_0''|-|x''-x_0''|\geq 3\tau-2\tau=\tau$. Therefore $\psi(x)\geq l(x)\geq \tau/2$ on $\partial\Omega\cap B_2$. Let $\bar{\psi}=2C_3\psi$, where $C_3=\max\{C_1,C_2\}$. Then we have for any $x\in\partial\Omega\cap B_2$,
    \begin{equation*}
        \bar{\psi}(x)\geq C_2\tau \geq \left|\frac{\partial h}{\partial e_n}(x)\right|.
    \end{equation*}
    And for any $x\in\partial B_2\cap\Omega$, we have $\psi(x)=1$, then
    \begin{equation*}
        \bar{\psi}(x)\geq C_1 \geq \left|\frac{\partial h}{\partial e_n}(x)\right|.
    \end{equation*}
    Note that the function $\frac{\partial h}{\partial e_n}$ is also harmonic. By maximum principle, we have $\bar{\psi}(x)\geq\left|\frac{\partial h}{\partial e_n}(x)\right|$ for $x\in\Omega\cap B_2$. Particularly, $\bar{\psi}(x)\geq\left|\frac{\partial h}{\partial e_n}(x)\right|$ for $x\in\partial S\cap\Omega$.
    
    Recall that $g$ is defined by \eqref{problem1}, we have $\left|\frac{\partial g}{\partial e_n}(x)\right|=\left|\frac{\partial h}{\partial e_n}(x)\right|$ for $x\in\partial S\cap\Omega$ and $\left|\frac{\partial g}{\partial e_n}(x)\right|=0$ for $x\in\partial S\setminus\Omega$. Then we have for $x\in\partial S$, $\bar{\psi}(x)\geq\left|\frac{\partial g}{\partial e_n}(x)\right|$. Note that the function $\frac{\partial g}{\partial e_n}(x)$ is also harmonic. By the maximum principle, we have $\bar{\psi}(x)\geq\left|\frac{\partial g}{\partial e_n}(x)\right|$ for $x\in S$. On the other hand, by the Cauchy estimates, $\sup_S|\nabla \bar{\psi}|\leq C \sup_{B_2}|\bar{\psi}|\leq C$. Note that $\bar{\psi}=0$ on $\Gamma_-$, we have $\bar{\psi}(x',x'')\leq Cx''$ in $S$ and thus $\left|\frac{\partial g}{\partial e_n}(x',x'')\right|\leq Cx''$ in $S$. For $(x',x'')\in\partial\Omega\cap S$, we have $x''\leq 4\tau$. Therefore $\left|\frac{\partial g}{\partial e_n}(x)\right|\leq C\tau$ on $\partial\Omega\cap S$, which is the desired claim.

    For the second estimate in the theorem, we note that in our construction, $\frac{\partial g}{\partial e_n}=0$ on $\Gamma_-$. Then by the maximum principle and Hopf's lemma, $|g|$ can only attain its maximum on $\Gamma_+\cup\Sigma$. Again by the construction of $g$, we have $g=h$ on $\Gamma_+$ and $g=\tilde{h}$ on $\Sigma$. Therefore,
    \begin{equation*}
        \sup_S|g|=\sup_{\Gamma_+\cup\Sigma}|g|= \max\{\sup_{\Gamma_+}|h|,\sup_{\Sigma}|\tilde{h}|\} = \sup_{\Gamma_+\cup(\Sigma\cap\Omega)}|h|= \sup_{\Omega\cap S}|h|,
    \end{equation*}
    where we also use the maximum principle and Hopf's lemma for $h$ in the last equality.
\end{proof}

\end{section}

\begin{section}{Nodal Sets near the Boundary}\label{Nodal Sets near the Boundary}

\begin{subsection}{A Standard Division of Cubes on the Boundary}
    The following division process of cubes is firstly introduced by Logunov in \cite{logunov2021sharp}. It is worth pointing out that though we argue on $C^{1,1}(\tau)$ domains in this paper, the following division also works on Lipschitz domains with small Lipschitz constant.
    
    Let $\Omega$ be a bounded domain in $\R^n$ and $\partial\Omega\cap B\in C^{1,1}(\tau)$ where $B=B(x,r)$ for some $x\in\partial\Omega$. Also assume $\tau<(16\sqrt{n})^{-1}$. Choose the local coordinate as in Remark \ref{choice of the coordinate}. Let $Q$ be a cube centred at $x_Q\in\partial\Omega\cap B$ whose sides are parallel to the axes of the coordinate system and such that $Q\subset B$. Denote the side length of $Q$ by $s$.

    We cover $\Omega\cap Q$ by small cubes in the following way. Firstly, consider the projection $\pi(Q)$ of $Q$ to $\R^{n-1}\times \{0\}$. Let $k\geq 3$. Divide $\pi(Q)$ into $2^{k(n-1)}$ equal small cubes $w$ with side length $2^{-k}s$ in the normal way. For each $(n-1)$-dimensional $w$, there exists a unique $n$-dimensional cube $q$ such that $\pi(q)=w$ and the center of $q$ lies on $\partial\Omega\cap B$. Then we use at most $2^k$ small cubes $p$ with side length $2^{-k}s$ to cover $(\pi^{-1}(w)\cap(\Omega\cap Q))\setminus q$ such that $p\subset Q$, $\pi(p)=w$, $p$ has no inner common points with $q$. Cubes $p$ may overlap.

    Let $\mathcal{B}_k(Q)$ denote the set of cubes $q$ which intersect the boundary and $\mathcal{I}_k(Q)$ denote the set of inner cubes $p$. Since $\tau<(16\sqrt{n})^{-1}$, for inner cubes $p$, $dist(p,\partial\Omega)\geq s/4$. The triple $(Q,\mathcal{B}_k(Q),\mathcal{I}_k(Q))$ is called \textit{the standard construction}.
\end{subsection}

\begin{subsection}{Cubes with Large Doubling Indexes}

\begin{lemma}\label{cubes with large doubling}
    There exist constants $k_0\geq 3$ and $N_0\geq 1$ such that for any integer $k\geq k_0$, there exists $\tau(k)>0$ for which the following statement holds. Suppose that $\Omega$ is a $C^{1,1}$ domain in $\R^n$, $\partial\Omega\cap B\in Lip(\tau)$, $\tau<\tau(k)$, and $Q\subset\frac{1}{64}B$ is a cube centred on $\partial\Omega$. Then for any $h\in C(\overline{\Omega})$ harmonic in $\Omega$, with $\frac{\partial h}{\partial\nu}=0$ on $\partial\Omega\cap B$, and $N^*_h(Q)>N_0$, there exists a cube $q\in\mathcal{B}_k(Q)$ such that $N^*_h(q)\leq N^*_h(Q)/2$.
\end{lemma}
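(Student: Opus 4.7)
We argue by contradiction, reducing the Neumann boundary configuration near $x_Q \in \partial\Omega$ to an interior one by means of Theorem~\ref{thm approximation}, after which the conclusion follows from a counting lemma for doubling indices of harmonic functions on interior cubes (as used in \cite{logunov2021sharp} for the Dirichlet case). Suppose the conclusion fails: $N := N^*_h(Q) > N_0$ yet $N^*_h(q) > N/2$ for every $q \in \mathcal{B}_k(Q)$. After rescaling $Q$ to have unit side, normalize $h$ so that $\sup_{\Omega \cap 2Q}|h| = 1$.

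Applying Theorem~\ref{thm approximation} at $x_Q$ yields a cylinder $S$ with flat Neumann face $\Gamma_-$ and a harmonic function $g$ on $S$ with $\partial g/\partial e_n = 0$ on $\Gamma_-$ and $\sup_{\Omega\cap S}|g - h| < C\tau$. Extending $g$ evenly across $\Gamma_-$ produces $\tilde g$ harmonic in the doubled cylinder $\tilde S := S \cup S^-$. Provided $\tau(k)$ is chosen sufficiently small in terms of $k$ and the absolute constant $N_0$, the sup-norm error $C\tau$ becomes negligible compared with the $L^2$ quantities defining the doubling indices at every scale relevant to the argument. Consequently the doubling indices of $h$ on cubes in $\Omega \cap S$ agree, up to an absolute additive constant, with those of $\tilde g$ on the corresponding cubes in $\tilde S$.

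For each boundary cube $q \in \mathcal{B}_k(Q)$, the assumption $N^*_h(q) > N/2$ therefore yields, after reflection, an interior cube $q' \subset \tilde S$ straddling $\Gamma_-$ with $N^*_{\tilde g}(q') > N/3$. Collecting these over $q$ produces $\sim 2^{k(n-1)}$ interior subcubes of a fixed cube $\tilde Q \subset \tilde S$ (with $N^*_{\tilde g}(\tilde Q) \lesssim N$) whose doubling indices all exceed $N/3$. This contradicts the Logunov-type interior counting lemma for harmonic doubling indices, which bounds the number of subcubes of a regular $M^n$ partition of $\tilde Q$ with doubling index exceeding a fixed fraction of $N^*_{\tilde g}(\tilde Q)$ by a quantity of the form $C(n) M^{n - \alpha}$ with some $\alpha = \alpha(n) > 1$. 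Choosing $k_0$ so that $2^{k(n-1)} > C(n)\cdot 2^{k(n-\alpha)}$ for every $k \geq k_0$ completes the contradiction and yields a boundary cube with the required small doubling.

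The principal obstacle is the quantitative transfer of doubling indices from $h$ to $\tilde g$ in the second paragraph. Because $|g - h| < C\tau$ controls only the sup-norm of the difference, while a doubling index is defined through ratios of $L^2$ integrals over balls whose $L^2$ norms could in principle be comparable to or smaller than $C\tau$, one must combine Theorem~\ref{thm approximation} with Lemma~\ref{local boundedness}, the almost-monotonicity of Lemma~\ref{monotonicity of doubling}, and Corollary~\ref{three ball ineq} to convert sup-norm closeness into the required $L^2$ comparison at each of the $\sim k$ scales involved. The admissible $\tau(k)$ must shrink as $k$ grows precisely to absorb the accumulation of these scale-by-scale errors; once $k$ is fixed, the constants appearing in the interior counting lemma are absolute, so $N_0$ may indeed be taken as an absolute constant.
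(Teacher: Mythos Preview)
Your approach has a genuine gap, and it occurs precisely at the ``principal obstacle'' you flag in the last paragraph: the quantitative transfer of doubling indices from $h$ to $\tilde g$ cannot be carried out with $\tau$ depending only on $k$ (and $N_0$), because $N=N^*_h(Q)$ is unbounded. Concretely, under your normalization $\sup_{\Omega\cap 2Q}|h|=1$, the assumption $N^*_h(q)>N/2$ together with almost monotonicity forces $H_h(y_q,r)$ on balls of scale $r\sim 2^{-k}s(Q)$ to be of order at most $e^{-cNk}$. The sup-norm bound $|g-h|<C\tau$ from Theorem~\ref{thm approximation} then gives only
\[
\bigl|H_g(y_q,r)^{1/2}-H_h(y_q,r)^{1/2}\bigr|\lesssim \tau\,r^{n/2},
\]
which is useless for comparing the ratio $H_g(2r)/H_g(r)$ to $H_h(2r)/H_h(r)$ once $H_h(r)^{1/2}\ll\tau$, i.e.\ once $N\gtrsim\log(1/\tau)$. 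Since the lemma must hold for arbitrarily large $N$, no choice of $\tau(k)$ independent of $N$ can make the error negligible at the relevant scales. This is exactly why the paper uses Theorem~\ref{thm approximation} only in the \emph{small} doubling regime (Lemma~\ref{cubes with small doubling}), where $N\le N_0$ is bounded and one may take $\tau=\tau(N_0)$.

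The paper's proof of this lemma avoids the approximation altogether and works directly with $h$. If every $q\in\mathcal{B}_k(Q)$ had $N^*_h(q)>N/2$, then iterating almost monotonicity $k$ times shows $h$ (and $\nabla h$) are of size at most $e^{-cNk}$ on the upper quarter $q^+$ of each boundary subcube. These $q^+$ cover a flat interior hypersurface $\Gamma_0$ bounding a half-ball $B_{0,+}\subset\Omega$; the propagation-of-smallness Lemma~\ref{propagation of smallness} then forces $|h|\le Ce^{-c\gamma Nk}$ on a fixed interior ball $B_2\subset\frac13 B_{0,+}$. Comparing $\int_{B_2}h^2$ with $\int_{B_1\cap\Omega}h^2=M^2$ via a bounded number of applications of almost monotonicity yields $c\gamma Nk\le C'(N+1)$, which is impossible for $k\ge k_0(n)$ and $N>N_0(n)$. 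The only smallness requirement on $\tau$ is the geometric one $\tau<2^{-k-3}$ ensuring $\Gamma_0\subset\bigcup q^+$, so $\tau(k)$ indeed depends only on $k$.
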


\begin{proof}
    Let $x_Q$ and $l$ denote the center of $Q$ and the diameter of $Q$ respectively. Let $B_1=B(x_Q,l)$. Then $B_1\subset B$. Denote $M^2=\int_{B_1\cap\Omega}h^2$ and $N=N^*_h(Q)$.

    We prove the lemma by contradiction. Suppose $N^*_h(q)>N/2$ for each $q\in\mathcal{B}_k(Q)$. Then for each $q$, there exists $y_q\in q\cap\overline{\Omega}$ and $0<r_q\leq2^{-k}l$ such that $N_h(y_q,r_q)>N/2$. With the assumption that $N$ is sufficiently large, the almost monotonicity of the doubling index (Lemma \ref{monotonicity of doubling}) implies $N_h(y_q,2^j r_q)>cN$ for $1\leq j\leq k$ and $c$ only depneds on the $C^{1,1}$ character of $\partial\Omega$.

    Assumeing that $k\geq 20$, apply the estimate of the doubling index $k-4$ times and use that $B(y_q,l/2)\subset B_1$ to obtain
    \begin{equation*}
        \begin{split}
            \int_{B(y_q,2^{-k+2}l)\cap\Omega} h^2 &\leq \int_{B(y_q,8r_q)\cap\Omega} h^2 \leq e^{-cN(k-4)}\int_{B(y_q,2^{k-1}r_q)\cap\Omega} h^2 \\
            &\leq e^{-cN(k-4)}\int_{B(y_q,l/2)\cap\Omega} h^2 \leq e^{-c'Nk} M^2,
        \end{split}
    \end{equation*}
    By the local boundedness near the boundary (Lemma \ref{local boundedness}), we have
    \begin{equation}\label{bound of h}
        \sup_{B(y_q,2^{-k+1}l)\cap\Omega}h^2 \leq C2^{nk}l^{-n} \int_{B(y_q,2^{-k+2}l)\cap\Omega} h^2 \leq C2^{nk}l^{-n}e^{-cNk}M^2.
    \end{equation}

    As in the standard construction, assume that $\tau<(16\sqrt{n})^{-1}$. For each $q\in\mathcal{B}_k(Q)$, denote by $q^+$ its upper quarter, where ``up" is in the direction of $e_n$. Then $dist(q^+,\partial\Omega)\geq 2^{-k-3}s$, where $s$ denotes the side length of $Q$. Thus we have $l=\sqrt{n}s$. We also have for any $y\in q^+$, $B(y,2^{-k-3}s)\subset B(y_q,2^{-k+1}l)$. By the standard Cauchy estimate,
    \begin{equation}\label{bound of Dh}
        \sup_{q^+}|\nabla h|\leq C2^kl^{-1}\sup_{B(y_q,2^{-k+1}l)\cap\Omega} |h| \leq C2^{k(n+2)/2}l^{-(n+2)/2}e^{-cNk}M.
    \end{equation}
    
    Let $B_0=B(x_Q+3\cdot2^{-k-3}se_n,s/2)$ and let $B_{0,+}=\{(x',x'')\in B_0:x''\geq x_Q+3\cdot2^{-k-3}s\}$ be the upper half of $B_0$. We denote by $\Gamma_0$ the flat part of the boundary of $B_{0,+}$. Note that $2B_0\subset B_1$. Assuming that $\tau<2^{-k-3}$, we have $dist(B_{0,+},\partial\Omega)\geq 2^{-k-2}s$. Then by the local boundedness (Lemma \ref{local boundedness}) and the interior Cauchy estimate, we get
    \begin{equation*}
        \sup_{B_0\cap\Omega} |h| \leq Cl^{-n/2} M,\quad \sup_{B_{0,+}}|\nabla h| \leq C 2^k l^{-(n+2)/2} M.
    \end{equation*}
    We also have $\Gamma_0\subset\bigcup_{q\in\mathcal{B}_k(Q)}q^+$ since $\tau<2^{-k-3}$. Then by \eqref{bound of h} and \eqref{bound of Dh}, we have 
    \begin{equation*}
        \sup_{\Gamma_0}|h|\leq C2^{kn/2}l^{-n/2}e^{-cNk}M,\quad \sup_{\Gamma_0}|\nabla h| \leq C2^{k(n+2)/2}l^{-(n+2)/2}e^{-cNk}M.
    \end{equation*}
    Let $\tilde{h}=C^{-1}2^{-k}l^{n/2}M^{-1}h$, then $\tilde{h}$ satisfies
    \begin{equation*}
        \sup_{B_0\cap\Omega} |\tilde{h}| \leq 1,\quad \sup_{B_{0,+}}|\nabla \tilde{h}| \leq 1,\quad \sup_{\Gamma_0}|\tilde{h}|\leq \varepsilon,\quad \sup_{\Gamma_0}|\nabla\tilde{h}|\leq \varepsilon,
    \end{equation*}
    where $\varepsilon=C2^{kn/2}e^{-cNk}$. Apply Lemma \ref{propagation of smallness} to $\tilde{h}$, we have $\sup_{\frac{1}{3}B_{0,+}}|\tilde{h}|\leq C\varepsilon^\gamma$, that is
    \begin{equation*}
        \sup_{\frac{1}{3}B_{0,+}} |h| \leq C 2^{\gamma kn/2+k}l^{-n/2}e^{-c\gamma Nk}M.
    \end{equation*}

    Let $y_Q=x_Q+\frac{s}{12}e_n$ and let $m$ be the least interger such that $2^m>16\sqrt{n}$. Then $B_2=B(y_Q,2^{-m}l)\subset\frac{1}{3}B_{0,+}$ for sufficiently large $k$. With the fact $|B_2|\leq Cl^n$, we have
    \begin{equation*}
        \int_{B_2}h^2 \leq |B_2|\sup_{B_2} h^2 \leq |B_2|\sup_{\frac{1}{3}B_{0,+}} h^2 \leq C2^{\gamma kn+2k}e^{-c\gamma Nk}M^2.
    \end{equation*}

    Finally, we estimate the growth of $h$ at $y_Q$ in terms of the doubling index. On the one hand, since $B_1\subset B(y_Q,2l)$,
    \begin{equation*}
        \log\frac{\int_{B(y_Q,2l)\cap\Omega}h^2}{\int_{B_2}h^2} \geq \log\frac{\int_{B_1\cap\Omega}h^2}{\int_{B_2}h^2}\geq c\gamma Nk-\gamma kn -2k-C,
    \end{equation*}
    where we have used $\log2<1$. On the other hand, by the almost monotonicity of the doubling index (Lemma \ref{monotonicity of doubling}),
    \begin{equation*}
        \log\frac{\int_{B(y_Q,2l)\cap\Omega}h^2}{\int_{B_2}h^2} = \sum_{j=0}^m N_h(y_Q,2^{-j}l) \leq C(m+1)(N_h(y_Q,l)+1)\leq C'(N+1).
    \end{equation*}
    Then the inequality above implies
    \begin{equation*}
        c\gamma Nk-\gamma kn -2k-C \leq C'(N+1).
    \end{equation*}
    Choose $k$ sufficiently large so that $c\gamma k/2>C'$, then we have
    \begin{equation*}
        N\leq \frac{2(\gamma kn+2k+C)}{c\gamma k}\leq C(n+\gamma^{-1}). 
    \end{equation*}
    Taking $N_0=C(n+\gamma^{-1})$, we obtain a contradiction for $N>N_0$.
\end{proof}
    
\end{subsection}

\begin{subsection}{Cubes with Small Doubling indexes}\label{Cubes with Small Doubling indexes}

\begin{lemma}\label{cubes with small doubling}
    For any $N>0$ there exist $\tau(N)$ and $k(N)$ such that the following statement holds. Suppose that $\Omega$ is a $C^{1,1}$ domain in $\R^n$, $\partial\Omega\cap B\in Lip(\tau)$, $\tau<\tau(N)$, and $Q\subset\frac{1}{64}B$ is a cube centred on $\partial\Omega$. Let $h\in C(\overline{\Omega})$ be a non-zero function harmonic in $\Omega$, with $\frac{\partial h}{\partial \nu}=0$ on $\partial\Omega\cap B$ and $N^*_h(Q)\leq N$. Then for any $k\geq k(N)$, there exists $q\in\mathcal{B}_k(Q)$ such that $Z(h)\cap q=\varnothing$.
\end{lemma}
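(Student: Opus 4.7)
\emph{Proof plan.} The plan is to combine Theorem~\ref{thm approximation} with an even reflection across the flat bottom $\Gamma_-$ of the approximation cylinder and to exploit the resulting harmonic function on a doubled cylinder to produce a boundary cube on which $h$ does not vanish. After a harmless rescaling, we may assume $Q$ has diameter $\sim 1$. Applying Theorem~\ref{thm approximation} with $x_0 = x_Q$ and $\tau<\tau(N)$ small, and normalizing $\sup_{\Omega\cap S}|h|=1$, we obtain $g\in C(\overline{S})$ with $\Delta g=0$ in $S$, $\partial g/\partial e_n=0$ on $\Gamma_-$, $\sup_S|g|=1$, and $\sup_{\Omega\cap S}|g-h|<C\tau$. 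Even reflection across $\Gamma_-$ then extends $g$ to a function $\tilde g$ harmonic on the doubled cylinder $\tilde S$, with $\sup_{\tilde S}|\tilde g|=1$.

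The doubling assumption $N^*_h(Q)\leq N$, via the almost monotonicity of the doubling index (Lemma~\ref{monotonicity of doubling}), forces a definite lower bound $\int_B h^2\geq c(N)$ on some ball $B\subset Q\cap\Omega$ of fixed radius. Because $|g-h|<C\tau$, the same lower bound (up to constants) is inherited by $g$, and by even symmetry by $\tilde g$. Together with $\sup_{\tilde S}|\tilde g|=1$, this bounds the doubling index of $\tilde g$ at every point of an interior ball of $\tilde S$ by some $\bar N(N)$.

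The key analytic step is then to produce a cube $q^*\subset\tilde S$ of side length $\ell^*=\ell^*(\bar N)>0$, centered within vertical distance $\ell^*/4$ of $\Gamma_-$ and with $\pi(q^*)\subset\pi(Q)$, on which $|\tilde g|\geq c^*(\bar N)>0$ pointwise. The existence of such a cube rests on two ingredients: (i) bounded doubling controls the vanishing order of $\tilde g$, yielding a lower bound on the Lebesgue measure of $\{|\tilde g|\geq c^*\}$ inside a fixed interior ball of $\tilde S$ and hence some non-vanishing cube; and (ii) the Neumann condition $\partial\tilde g/\partial e_n=0$ on $\Gamma_-$, combined with Cauchy estimates for $\partial\tilde g/\partial x''$, controls the drop of $|\tilde g|$ as one moves vertically toward $\Gamma_-$ and, after possibly localizing to a smaller scale, enables positioning the non-vanishing cube near $\Gamma_-$. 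I expect this symmetry-plus-vanishing-order argument to be the main obstacle of the proof.

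With $q^*$ produced, set $k(N)$ so that $2^{-k}<\ell^*/(10\sqrt n)$ and refine $\tau(N)<\min\{\ell^*/10,\,c^*/(2C)\}$. Since $\partial\Omega\cap Q$ lies within $4\tau$ of $\Gamma_-$, any boundary cube $q\in\mathcal B_k(Q)$ whose projection satisfies $\pi(q)\subset\pi(q^*)$ is contained entirely in $q^*$; therefore, on $q\cap\overline\Omega$,
\begin{equation*}
|h| \;\geq\; |\tilde g|-C\tau \;\geq\; c^*(\bar N)/2 \;>\; 0,
\end{equation*}
which gives $Z(h)\cap q=\varnothing$, as required.
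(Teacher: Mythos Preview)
Your overall architecture matches the paper: approximate $h$ by $g$ via Theorem~\ref{thm approximation}, reflect $g$ evenly across $\Gamma_-$, transfer the doubling bound from $h$ to $g$ using Lemma~\ref{monotonicity of doubling} and the $C\tau$ approximation, and finally pick $k(N),\tau(N)$ so that some $q\in\mathcal B_k(Q)$ sits inside the non-vanishing region of $g$. The final paragraph of your sketch is correct and is exactly how the paper concludes.

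The gap is precisely where you flag it. Your ingredient (i) (a Remez/vanishing-order argument) produces a non-vanishing cube \emph{somewhere} in $\tilde S$, possibly at height comparable to $1$. Your ingredient (ii) gives $|\tilde g(x',x'')-\tilde g(x',0)|\le C(x'')^2$ via $\partial_{x''}\tilde g|_{\Gamma_-}=0$ and a second-order Cauchy estimate, but this is only useful once you already have a non-vanishing point at height $\lesssim\sqrt{c^*}$. Nothing in the sketch bridges the two, and ``localizing to a smaller scale'' does not: at each rescaled ball the Remez cube may again land at the top, so no iteration forces it toward $\Gamma_-$.

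The paper resolves this with a single use of the quantitative Cauchy uniqueness Lemma~\ref{propagation of smallness} (see Lemma~\ref{lemma for a half ball}). Set $\delta=\sup_{\Gamma_-\cap\frac{1}{16}B}|g|$; since $\partial_n g=0$ on $\Gamma_-$, Lemma~\ref{propagation of smallness} gives $\sup_{\frac{1}{3}B_+}|g|\le C\delta^\gamma$, hence $\int_{\frac{1}{64}B}g^2\le C\delta^{2\gamma}$. Combined with the lower bound on $\int g^2$ coming from $N_g\le C(N+1)$, this forces $\delta\ge c e^{-C N/\gamma}$, i.e.\ a point $x_*\in\Gamma_-$ with $|g(x_*)|\ge c(N)$. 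A plain gradient bound then yields the ball $B(x_*,\rho(N))$ on which $|g|\ge c_0(N)$. Replace your (i)--(ii) by this argument and the proof is complete. (Minor point: you normalize $\sup_{\Omega\cap S}|h|=1$, while Theorem~\ref{thm approximation} is stated under $\sup_{\Omega\cap B(x_0,3)}|h|=1$; the doubling bound $N^*_h(Q)\le N$ lets you pass between the two at the cost of an $N$-dependent constant in front of $\tau$, which is harmless.)
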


Considering the argument in Section \ref{Approximation for the Neumann Problem}, we first prove the following version of the lemma for a half ball.

\begin{lemma}\label{lemma for a half ball}
    For any $N>0$, there exist $\rho=\rho(N)$ and $c_0=c_0(N)$ such that the following statement holds. Let $B$ be the unit ball in $\R^n$ and let $B_+$ be the upper half ball, $B_+=\{y\in\R^n:|y'|^2+|y''|^2<1,y''>0\}$. Let $g\in C(\overline{B_+})$ be a harmonic function in $B_+$, $\frac{\partial g}{\partial\nu}=0$ on $\Gamma_0=\overline{B_+}\cap\{y''=0\}$, and $g$ is normalized as $\sup_{\frac{1}{4}B_+}|g|=1$. If $N_g(0,1/4)\leq N$, then there exists $x\in\Gamma_0\cap\frac{1}{8}B$ such that
    \begin{equation*}
        |g(y)| \geq c_0,
    \end{equation*}
    for any $y\in B(x,\rho)\cap B_+$.
\end{lemma}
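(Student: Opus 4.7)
My plan is to argue by contradiction, combining the quantitative propagation of smallness (Lemma \ref{propagation of smallness}) with an iterated doubling lower bound on the $L^2$-mass near the origin. First I would extend $g$ by even reflection, $\tilde g(y', y'') := g(y', |y''|)$; the Neumann condition renders $\tilde g$ harmonic on all of $B$, and $\sup_{\frac{1}{4}B}|\tilde g| = 1$ with $N_{\tilde g}(0, r) = N_g(0, r) \leq N$ for $r \leq 1/4$. The interior monotonicity \eqref{monotonicity of doubling interior} extends the doubling bound to all smaller radii, and Moser's estimate together with interior Cauchy estimates then yield constants $M = M(N)$ and $L = L(N)$ with $\sup_{B(0, 1/2)}|\tilde g| \leq M$ and $\sup_{B(0, 1/3)}|\nabla \tilde g| \leq L$.

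Next, suppose the lemma fails for parameters $\rho, c_0 > 0$ to be chosen at the end. Then for every $x \in \Gamma_0 \cap \frac{1}{8}B$ there exists $y_x \in B(x, \rho) \cap B_+$ with $|g(y_x)| < c_0$, and the Lipschitz bound from the first step gives $|g(x)| < c_0 + L\rho$. Setting $\rho := c_0/L$ forces $|g| \leq 2c_0$ on $\Gamma_0 \cap \frac{1}{8}B$; combined with the Neumann condition $\partial_n g \equiv 0$ there, both $g$ and $\partial_n g$ are bounded by $\varepsilon := 2c_0$ on that flat piece. Rescaling by $h(y) := g(y/8)$ turns $\Gamma_0 \cap \frac{1}{8}B$ into the full flat boundary $\Gamma$ of the unit half-ball; after dividing by $K := \max(M, L/8)$ so that $|h/K|, |\nabla(h/K)| \leq 1$, Lemma \ref{propagation of smallness} produces
\begin{equation*}
|g(z)| \leq CK^{1-\gamma}(2c_0)^\gamma =: \varepsilon_1 \quad \text{for every } z \in \tfrac{1}{24}B_+.
\end{equation*}

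For the final contradiction I would bound $\int_{\frac{1}{24}B_+}|g|^2$ from below. Moser's estimate together with the normalisation $\sup_{\frac{1}{4}B_+}|g| = 1$ gives $\int_{\frac{1}{2}B_+}|g|^2 \geq c > 0$ with $c$ absolute, and iterating the doubling inequality $\int_{B(0,2r)\cap B_+}|g|^2 \leq e^N \int_{B(0,r)\cap B_+}|g|^2$ three times back from $r = 1/4$ yields $\int_{\frac{1}{32}B_+}|g|^2 \geq e^{-3N} c$, whence $\int_{\frac{1}{24}B_+}|g|^2 \geq c'(N) > 0$. On the other hand the preceding step forces $\int_{\frac{1}{24}B_+}|g|^2 \leq \varepsilon_1^2 |\tfrac{1}{24}B_+|$, which for $c_0$ sufficiently small (depending only on $N$) is strictly less than $c'(N)$: contradiction. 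This choice of $c_0(N)$ and $\rho(N) := c_0(N)/L$ completes the proof. The main obstacle is the mismatch in scales: Lemma \ref{propagation of smallness} only delivers smallness on one third of the half-ball whose flat boundary carries the small data, so one cannot reach $\frac{1}{4}B_+$ directly and thereby contradict the normalisation by chasing $|g|$ outward. Quantifying the defect via an iterated-doubling lower bound on the $L^2$-mass in the smaller region $\frac{1}{24}B_+$ sidesteps this issue and avoids having to bound $\sup|g|$ on regions approaching $\partial B_+$, which would not follow from the hypotheses.
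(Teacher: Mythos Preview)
Your proposal is correct and follows essentially the same route as the paper: even reflection to a harmonic function on the full ball, propagation of smallness (Lemma~\ref{propagation of smallness}) from the flat boundary, and an iterated doubling lower bound on the $L^2$-mass near the origin to force a contradiction; the paper simply runs this directly rather than by contradiction, setting $\delta=\sup_{\Gamma_0\cap\frac{1}{16}B}|g|$ and showing $\delta\ge ce^{-3N/\gamma}$ by the same two ingredients. One cosmetic remark: the bound $\sup_{B(0,1/2)}|\tilde g|\le M(N)$ is not clearly available from the hypotheses (the doubling assumption only controls $\int_{B(0,1/2)}\tilde g^2$, which via sub-mean-value gives $\sup$-control on compact subsets of $B(0,1/2)$ but not up to its boundary), yet this is never actually used since your rescaled $h(y)=g(y/8)$ only samples $\tfrac18 B_+$, where $|g|\le 1$ by normalisation and $|\nabla g|\le C$ by Cauchy---so absolute constants suffice throughout.
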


\begin{proof}
    Let $B_-$ be the reflection of $B_+$ with respect to the hyperplane $y''=0$. Then $g$ can be extended to a harmonic function in $B$ by $g(y',y'')=g(y',-y'')$ when $(y',y'')\in B_-$. We denote this extension by $g$ as well. Since $\sup_{\frac{1}{4}B}|g|=1$, by the Cauchy estimate,
    \begin{equation*}
        \sup_{\frac{1}{8}B}|\nabla g| \leq C\sup_{\frac{1}{4}B}|g| \leq C.
    \end{equation*}

    Let $\delta=\sup_{\Gamma_0\cap\frac{1}{16}B}|g|$. Since $\frac{\partial g}{\partial\nu}=0$ on $\Gamma_0$, by the simple fact that $L^2$-norm is controlled by $L^\infty$-norm and the propagation of smallness (Lemma \ref{propagation of smallness}), we have
    \begin{equation*}
        \int_{\frac{1}{64}B}g^2 \leq C\sup_{\frac{1}{64}B} g^2 \leq C\delta^{2\gamma}.
    \end{equation*}
    By the local boundedness (Lemma \ref{local boundedness}), we also have
    \begin{equation*}
        \int_{\frac{1}{2}B}g^2 \geq c\sup_{\frac{1}{4}B}g^2 = c.
    \end{equation*}
    Then we have
    \begin{equation*}
        \frac{\int_{\frac{1}{2}B}g^2}{\int_{\frac{1}{64}B}g^2} \geq c\delta^{-2\gamma}.
    \end{equation*}
    On the other hand, we have
    \begin{equation*}
        \log\frac{\int_{\frac{1}{2}B}g^2}{\int_{\frac{1}{64}B}g^2} \leq 5N_g(0,1/4) \leq 5N,
    \end{equation*}
    where we have used the monotonicity of the doubling index \eqref{monotonicity of doubling interior}. We conclude that $\delta\geq ce^{-3N\gamma^{-1}}$. Then there exists $x\in\Gamma_0\cap\frac{1}{16}\overline{B}$ such that $|g(x)|\geq ce^{-3N\gamma^{-1}}$. Since the gradient of $g$ in $\frac{1}{8}B$ is bounded by a constant, there exists $\rho=\rho(N)$ such that
    \begin{equation*}
        |g(y)|\geq \frac{c}{2}e^{-3N\gamma^{-1}},
    \end{equation*}
    for any $y\in B(x,\rho)$.
\end{proof}

\begin{proof}[Proof of Lemma \ref{cubes with small doubling}]
    By rescaling, we can assume that the side length of $Q$, $s(Q)=4$. We may also assume that 
    \begin{equation*}
        \sup_{B(x_Q,3)\cap\Omega}|h|=1.
    \end{equation*}
    
    Let $x_1=x_Q-3\tau e_n$, $B_1=B(x_1,1)$, $B_{1,+}$ be the upper half of $B_1$ and $\Gamma_0$ be the flat part of $\partial B_{1,+}$. Consider the standard approximation defined in Section \ref{Approximation for the Neumann Problem}, $(x_1,S,\Omega\cap S)$. By Theorem \ref{thm approximation}, for $\tau<1/100$, there exists $g\in C(\overline{S})$ such that $\Delta g=0$ in $S$, $\frac{\partial g}{\partial e_n}=0$ on $\Gamma_-$, the lower surface of $S$ and
    \begin{equation}\label{difference between g and h}
        \sup_{\Omega\cap S}|g-h|<C\tau,
    \end{equation}
    \begin{equation}
        \sup_S |g| = \sup_{\Omega\cap S}|h|.
    \end{equation}

    The assumption $N^*_h(Q)\leq N$ implies $N_h(x_Q,4)\leq N$. By the almost monotonicity of the doubling index (Lemma \ref{monotonicity of doubling}) and the local boundedness (Lemma \ref{local boundedness}),
    \begin{equation*}
        \int_{B(x_Q,\frac{1}{8})\cap\Omega}h^2 \geq e^{-C(N+1)} \int_{B(x_Q,4)\cap\Omega}h^2 \geq ce^{-C(N+1)}\sup_{B(x_Q,3)\cap\Omega}|h| = ce^{-C(N+1)}.
    \end{equation*}
    Recall that $\tau<1/100$, we have $B(x_Q,\frac{1}{8})\cap\Omega\subset\frac{1}{4}B_{1,+}$. Then
    \begin{equation*}
        \left( \int_{\frac{1}{4}B_{1,+}}g^2 \right)^{\frac{1}{2}} \geq \left( \int_{B(x_Q,\frac{1}{8})\cap\Omega}g^2 \right)^{\frac{1}{2}} \geq \left( \int_{B(x_Q,\frac{1}{8})\cap\Omega}h^2 \right)^{\frac{1}{2}} - C\tau \geq ce^{-C(N+1)}-C\tau.
    \end{equation*}
    Assuming that $\tau(N)$ is sufficiently small, we conclude that
    \begin{equation}\label{lower bound of g}
        \int_{\frac{1}{4}B_{1,+}}g^2 \geq ce^{-C(N+1)}.
    \end{equation}
    We also have
    \begin{equation*}
        \int_{\frac{1}{2}B_{1,+}}g^2 \leq C\sup_{\frac{1}{2}B_{1,+}}g^2 \leq C\sup_S g^2 = C\sup_{\Omega\cap S} h^2 \leq C\sup_{B(x_Q,3)\cap\Omega} h^2 = C.
    \end{equation*}
    Then we conclude that
    \begin{equation*}
        N_g(x_1,\frac{1}{4}) = \log\frac{\int_{\frac{1}{2}B_{1,+}}g^2}{\int_{\frac{1}{4}B_{1,+}}g^2} \leq C(N+1).
    \end{equation*}
    By \eqref{lower bound of g}, we have $\sup_{\frac{1}{4}B_{1,+}}|g|\geq ce^{-C(N+1)}$. We apply Lemma \ref{lemma for a half ball} to $g$ and conclude that there exist $x_*\in\Gamma_0\cap\frac{1}{8}B_1$, $\rho=\rho(N)$ and $c_0=c_0(N)$ such that for any $y\in B(x_*,\rho)$,
    \begin{equation*}
        |g(y)|\geq c_0e^{-C(N+1)}.
    \end{equation*}

    Let $\tau(N)<\frac{1}{100}\rho(N)$ and $k(N)$ be sufficiently large, then for $k>k(N)$, at least one small cube $q\in\mathcal{B}_k(Q)$ satisfies $q\cap\Omega\subset B(x_*,\rho)\cap\Omega$. Also let $\tau(N)$ be sufficiently small in \eqref{difference between g and h} so that $\sup_{\Omega\cap S}|g-h|<\frac{c_0}{2}e^{-C(N+1)}$. Then we have for any $y\in q\cap\Omega$,
    \begin{equation*}
        |h(y)|\geq|g(y)|-\frac{c_0}{2}e^{-C(N+1)}\geq \frac{c_0}{2}e^{-C(N+1)}.
    \end{equation*}
    Thus $Z(h)\cap q=\varnothing$.
\end{proof}

\end{subsection}

\end{section}

\begin{section}{Proof of Theorem \ref{nodal set bounds for harmonic functions}}\label{Proof of Theorem}

Let $k_0=k_0(n)$ and $N_0=N_0(n)$ be as in Lemma \ref{cubes with large doubling}. For this $N_0$, by Lemma \ref{cubes with small doubling} , there exist relative constants $\tau(n)$ and $k_1(n)$. Let $k_2(n)=\max\{k_0,k_1\}$. Let $B=B(x,128r)$ with $x\in\partial\Omega$ as in the statement of Theorem \ref{nodal set bounds for harmonic functions} and $Q\subset 2B$ centred on $\partial\Omega$. We also write $N^{**}_h(Q)=\max\{N^*_h(Q),N_0/2\}$. Consider the standard construction $(Q,\mathcal{B}_k(Q),\mathcal{I}_k(Q))$, then we have for $k\geq k_2(n)$, $\tau\leq\tau(n)$, there exists $q_0\in\mathcal{B}_k(Q)$ such that at least one of the following statements holds:
\begin{equation*}
    \text{(i) } N^{**}_h(q_0)\leq N^{**}_h(Q)/2,\quad \text{(ii) } Z(h)\cap q_0=\varnothing.
\end{equation*}
\begin{subsection}{Reduction to boundary cubes}

Let $Q$ centred on $\partial\Omega$ as above, we claim that
\begin{equation}\label{nodal volume bounds for boundary cubes}
    \mathcal{H}^{n-1}(Z(h)\cap Q) \leq C N^{**}_h(Q) s(Q)^{n-1}.
\end{equation}

Assume \eqref{nodal volume bounds for boundary cubes} holds, we show that Theorem \ref{nodal set bounds for harmonic functions} follows.

We cover the ball $B(x,r)$ with cubes $Q_j\subset B(x,2r)$ such that $diam(Q_j)=r/10$ and either $dist(Q_j,\partial\Omega)>s(Q_j)/10$ (inner cubes) or $Q_j$ centred on $\partial\Omega$ (boundary cubes). The number of such cubes does not exceed $C=C(n)$.

For each $Q=Q_j$ in this covering, by the definition of $N^*_h(Q)$, there exist $y\in Q\cap\overline{\Omega}$ and $0<r_y\leq r/10$ such that $N_h(y,r_y)=N^*_h(Q)$.  By the almost monotonicity of the doubling index (Lemma \ref{monotonicity of doubling}), we have $N_h(y,2r)\geq c N_h(y,r_y)-C$. On the other hand, since $dist(x,y)\leq \frac{11}{10}r$, we have
\begin{equation*}
    N_h(y,2r)=\log\frac{\int_{B(y,4r)\cap\Omega}h^2}{\int_{B(y,2r)\cap\Omega}h^2} \leq \log\frac{\int_{B(x,8r)\cap\Omega}h^2}{\int_{B(x,r/2)\cap\Omega}h^2} \leq C(N_h(x,4r)+1)
\end{equation*}
again by Lemma \ref{monotonicity of doubling}. Hence $N^*_h(Q)\leq C(N_h(x,4r)+1)$ and $N^{**}_h(Q)\leq C'(N_h(x,4r)+1)$.

Each inner cube $Q\subset\Omega$ can be covered by at most $C$ balls $B_j=B(y_j,s(Q)/100)$ with $y_j\in Q$. Then $8\overline{B_j}\subset\Omega$ and $N_h(y_j,s(Q)/25)\leq N^*_h(Q)$. By Lemma \ref{nodal set bounds for interior balls}, we have
\begin{equation*}
\begin{split}
    \mathcal{H}^{n-1} (Z(h)\cap Q) &\leq \sum_j \mathcal{H}^{n-1} (Z(h)\cap B_j) 
    \leq C(N_h(y_j,s(Q)/25)+1)r^{n-1} \\
    &\leq C'(N^*_h(Q)+1)r^{n-1} \leq C''(N_h(x,4r)+1)r^{n-1}.
\end{split}
\end{equation*}

For boundary cubes, we use \eqref{nodal volume bounds for boundary cubes}. Thus for each boundary cube $Q$,
\begin{equation}
    \mathcal{H}^{n-1}(Z(h)\cap Q) \leq C N^{**}_h(Q) s(Q)^{n-1} \leq C'(N_h(x,4r)+1)r^{n-1}.
\end{equation}
Summing these inequalities over all cubes, we obtain the required estimate. It remains to prove \eqref{nodal volume bounds for boundary cubes}.
\end{subsection}

\begin{subsection}{Proof of (\ref{nodal volume bounds for boundary cubes})}

We fix a compact set $K\subset\Omega$ and prove that
\begin{equation}\label{nodal volume bounds for any compact set}
    \mathcal{H}^{n-1}(Z(h)\cap Q\cap K) \leq C_0 N^{**}_h(Q) s(Q)^{n-1},
\end{equation}
where $Q\subset2B$ is a cube  as in the standard construction and $C_0$ is independent of $K$. Then \eqref{nodal volume bounds for boundary cubes} follows.

For cubes $Q$ with sufficiently small side length, \eqref{nodal volume bounds for any compact set} holds since $Q\cap K=\varnothing$. We prove \eqref{nodal volume bounds for any compact set} by induction. Assume it holds for cubes with $s(Q)<s$, we prove it for cubes with $s(Q)<2^k s$, where $k\geq k_2(n)$ as in the beginning of this section.

We consider the standard construction $(Q,\mathcal{B}_k(Q),\mathcal{I}_k(Q)$. Each inner cube $q\in\mathcal{I}_k(Q)$ can be covered by balls $b$ centred in $q\subset Q$ with radii $s(q)/100$ and such that $8\bar{b}\subset\Omega$, so that the number of balls is bounded by a dimensional constant. For each such ball $b=B(y,s(q)/100)$, we have $N_h(y,s(q)/25)\leq N^*_h(Q)$. Then by Lemma \ref{nodal set bounds for interior balls}, we have
\begin{equation*}
    \sum_{q\in\mathcal{I}_k(Q)}\mathcal{H}^{n-1}(Z(h)\cap q) \leq C (N^*_h(Q)+1) s(Q)^{n-1}\leq C_1N^{**}_h(Q) s(Q)^{n-1},
\end{equation*}
where $C_1$ depends on $k$ and thus only on $n$.

For boundary cubes $q$, we have $N^{**}_h(q)\leq N^{**}_h(Q)$ since $q\subset Q$. Recall that there exists a cube $q_0\in\mathcal{B}_k(Q)$ such that either $N^{**}_h(q_0)\leq N^{**}_h(Q)/2$ or $Z(h)\cap q_0=\varnothing$. We use the induction assumption for every boundary cube and obtain
\begin{equation*}
    \begin{split}
        \mathcal{H}^{n-1} &(Z(h)\cap K\cap(\cup_{q\in\mathcal{B}_k(Q)}q)) \\
        &\leq \sum_{q\in\mathcal{B}_k(Q),q\neq q_0} \mathcal{H}^{n-1} (Z(h)\cap K\cap q) + \mathcal{H}^{n-1} (Z(h)\cap K\cap q_0) \\
        &\leq \sum_{q\in\mathcal{B}_k(Q),q\neq q_0} C_0 N^{**}_h(q) s(q)^{n-1} + \frac{C_0}{2} N^{**}_h(Q) s(q_0)^{n-1} \\
        &\leq \left( \frac{2^{k(n-1)}-1}{2^{k(n-1)}} + \frac{1}{2}\cdot\frac{1}{2^{k(n-1)}} \right) C_0 N^{**}_h(Q) s(Q)^{n-1}.
    \end{split}
\end{equation*}
Finally, we choose $C_0$ large enough so that
\begin{equation*}
    C_1 + \left( \frac{2^{k(n-1)}-1}{2^{k(n-1)}} + \frac{1}{2}\cdot\frac{1}{2^{k(n-1)}} \right) C_0 < C_0.
\end{equation*}
Note that $C_0$ does not depend on $K$. Then, summing the interior and boundary estimates, we obtain
\begin{equation*}
    \mathcal{H}^{n-1}(Z(h)\cap K\cap Q) \leq C_0 N^{**}_h(Q) s(Q)^{n-1}.
\end{equation*}
This concludes the induction step and the proof of \eqref{nodal volume bounds for boundary cubes}.

\end{subsection}

\end{section}

\begin{section}{Neumann Laplace Eigenfunctions}\label{Neumann Laplace Eigenfunctions}

\begin{subsection}{Harmonic extension and an estimate on the doubling index}

For the Neumann eigenfunction $u$ satisfying $\Delta u + \lambda u = 0$ in $\Omega_0\subset\R^n$ and $\frac{\partial u}{\partial \nu}=0$ on $\partial\Omega_0$. Let $\Omega=\Omega_0\times\R$ and 
\begin{equation}\label{harmonic extension}
    h(x,t)=e^{\sqrt{\lambda}t}u(x).
\end{equation}
Then $\Delta h=0$ in $\Omega$ and $\frac{\partial h}{\partial \nu}=0$ on $\partial\Omega=\partial\Omega_0\times\R$. And the zero set $Z(h)=Z(u)\times\R$. Also, we have the following estimate on the doubling index.

\begin{lemma}\label{estimate on doubling index}
    Let $\Omega_0\subset\R^n$ be a bounded $C^{1,1}$ domain. There exists $r_0=r_0(\Omega_0)$  such that the following statement holds. For any $r<r_0/16$ there exists $C=C(r,\Omega_0)$  such that for any Neumann Laplace eigenfunction $u$ in $\Omega_0$ with eigenvalue $\lambda$, the corresponding harmonic extension $h$ satisfies $N_h(y,r)\leq C\sqrt{\lambda}$ for any $y\in\overline{\Omega}=\overline{\Omega}_0\times\R$. 
\end{lemma}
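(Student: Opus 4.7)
The plan is to exploit the product structure of the harmonic extension $h(x,t)=e^{\sqrt{\lambda}t}u(x)$ and reduce the doubling ratio $H_h(y,2r)/H_h(y,r)$ to a ratio of $L^2$-masses of $u$ over concentric balls in $\Omega_0$, which is then controlled by a classical quantitative unique continuation estimate. For $y=(x_y,t_y)\in\overline{\Omega_0}\times\R$, Fubini and the explicit $t$-integration over the slices $|t-t_y|<\sqrt{r^2-|x-x_y|^2}$ give
\[
H_h(y,r)=\frac{e^{2\sqrt{\lambda}t_y}}{\sqrt{\lambda}}\int_{B(x_y,r)\cap\Omega_0} u^2(x)\,\sinh\!\bigl(2\sqrt{\lambda}\sqrt{r^2-|x-x_y|^2}\bigr)\,dx,
\]
so that the factor $e^{2\sqrt{\lambda}t_y}$ cancels in the doubling ratio, reflecting the $t$-translation invariance of $h$ up to an exponential prefactor.

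Using $\sinh(s)\leq\tfrac{1}{2}e^s$ in the numerator and restricting the denominator to $|x-x_y|\leq r/2$, where $2\sqrt{\lambda}\sqrt{r^2-|x-x_y|^2}\geq \sqrt{3}\,\sqrt{\lambda}\,r$ and hence $\sinh(\cdot)\geq\tfrac{1}{4}e^{\sqrt{3}\sqrt{\lambda}r}$ once $\sqrt{\lambda}r$ is bounded below, I would obtain
\[
N_h(y,r)\leq (4-\sqrt{3})\sqrt{\lambda}\,r+\log\frac{\|u\|^2_{L^2(\Omega_0)}}{\int_{B(x_y,r/2)\cap\Omega_0} u^2}+C.
\]
The remaining ingredient is a uniform quantitative lower bound
\[
\int_{B(x_y,r/2)\cap\Omega_0} u^2\geq c(r,\Omega_0)\,e^{-C(r,\Omega_0)\sqrt{\lambda}}\,\|u\|^2_{L^2(\Omega_0)}
\]
for every $x_y\in\overline{\Omega_0}$, which is a classical Donnelly--Fefferman-type quantitative unique continuation estimate for Neumann eigenfunctions. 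In the interior it follows from iterating a three-ball inequality, or equivalently from an Almgren-frequency argument, along a fixed chain of balls; near $\partial\Omega_0$ one first performs the $C^{1,1}$ flattening used in the proof of Lemma~\ref{local gradient estimate}, turning the Neumann problem into a uniformly elliptic divergence-form equation $-\mathrm{div}(A\nabla v)=\lambda J v$ with $A$ Lipschitz and a homogeneous Neumann condition on a flat piece, and then extends by even reflection to reduce to the interior case. Compactness of $\partial\Omega_0$ and chaining produce the bound with constants depending only on $r$ and the $C^{1,1}$ character of $\Omega_0$. Substituting back gives $N_h(y,r)\leq C(r,\Omega_0)\sqrt{\lambda}$; the regime where $\sqrt{\lambda}r$ is bounded is trivial since both sides are then of order $1$.

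The main obstacle is the quantitative unique continuation step: although classical, making it uniform up to a Neumann boundary on a merely $C^{1,1}$ domain relies on the boundary-flattening reduction combined with Carleman or Almgren-type machinery, which is technical but standard. In the paper's narrative this is invoked as a black box, after which the rest of the proof is the elementary bookkeeping with the $\sinh$-kernel described above.
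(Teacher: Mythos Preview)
Your Fubini/$\sinh$ computation is correct, and the reduction to the lower bound
\[
\int_{B(x_y,r/2)\cap\Omega_0} u^2 \;\geq\; c(r,\Omega_0)\,e^{-C(r,\Omega_0)\sqrt{\lambda}}\,\|u\|_{L^2(\Omega_0)}^2
\]
is a legitimate route. However, your description of the paper's argument is inaccurate: the paper never separates out the $t$-variable or invokes a unique-continuation estimate for $u$ itself. Instead it stays entirely in $\R^{n+1}$, builds an overlapping chain of balls $\tilde{B}_j$ from $(y',0)$ to the point where $|u|$ is maximized, and iterates the three-ball inequality for the \emph{harmonic} function $h$ (Corollary~\ref{three ball ineq}, which rests on the almost-monotonicity Lemma~\ref{monotonicity of doubling}); the $\sqrt{\lambda}$ enters only through $\sup_{4\tilde{B}_j\cap\Omega}|h|\le e^{2\sqrt{\lambda}r}$ and the chain length is bounded by a fixed $r/8$-net of $\overline{\Omega_0}$. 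Your approach essentially pushes the chain-of-balls argument down to $u$, but then the three-ball or Almgren step must carry the correct $\sqrt{\lambda}$ dependence for solutions of $\Delta u+\lambda u=0$ up to a $C^{1,1}$ Neumann boundary --- which is true, but the cleanest proof of that fact is precisely via the harmonic extension, i.e.\ the paper's route. So the two arguments use the same ingredients (boundary flattening, reflection, a monotonicity/three-ball step) in a different order; the paper's version avoids having to track $\lambda$ through the unique-continuation machinery, while yours makes the source of the $e^{C\sqrt{\lambda}r}$ factor more transparent. One small gap to patch: your treatment of the regime ``$\sqrt{\lambda}r$ bounded'' is not quite trivial as stated --- use $\sinh(s)\ge s$ in the denominator together with $\lambda\ge\lambda_1(\Omega_0)>0$ (as the paper does at the end of its proof) rather than a separate compactness argument.
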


\begin{proof}
    Let $r_0$ be such that $\partial\Omega\cap B(x,r_0)$ can be represented by a $C^{1,1}$ function in local coordinates for any $x\in\partial\Omega$. We have $r_0=r_0(\Omega)$. For any $y=(x,t)\in\overline{\Omega}$, let $y_0=(x,0)$. Since $h(x,t+s)=e^{\sqrt{\lambda}t}h(x,s)$, we have $N_h(y,r)=N_h(y_0,r)$. So it is sufficient to prove the estimate for $y\in\Omega\times\{0\}$.

    Fix $r<r_0/16$ and let $\mathcal{S}\subset\overline{\Omega}_0$ be a $r/8$-net for $\overline{\Omega}_0$, i.e., $\overline{\Omega}_0\subset\bigcup_{p\in\mathcal{S}}B(p,r/8)$. The number of points in $\mathcal{S}$ does not exceed $C(n,r,diam(\Omega_0))$. Assume that $|u(x_0)|=\max_{\Omega_0}|u|=1$. For any $y=(y',0)\in\overline{\Omega}_0\times\{0\}$, we consider a path $\gamma:[0,1]\to\overline{\Omega}_0$ from $y'$ to $x_0$. such that $\gamma((0,1))\subset\Omega_0$. Now we construct an overlapping chain of balls $\{B_j)\}_{j=0}^J$ with radii $r/2$. Let $y_0=y'$. Assuming that $y_j$ is constructed, we define $s_j=\sup\{ s\in[0,1]:|\gamma(s)-y_j|\leq r/8 \}$. If $s_j<1$, we have $|\gamma(s_j)-y_j|=r/8$ and choose $y_{j+1}\in\mathcal{S}$ such that $|\gamma(s_j)-y_{j+1}|<r/8$. If $s_j=1$, we choose $y_{j+1}=y_J=x_0$ and stop the chain. Define $B_j=B(y_j,r/2)$. In this way, we have $|y_j-y_{j+1}|<r/4$ for $0\leq j\leq J-1$ and $B_{j+1}\subset\frac{3}{2}B_j$. Also, $J$ does not exceed the number elements in $\mathcal{S}$ plus two.

    Let now $\tilde{B}_j=B((y_j,0),r/2)$ be the corresponding ball in $\R^{n+1}$. Then we have $\sup_{4\tilde{B}_j\cap\Omega}|h|\leq e^{2\sqrt{\lambda}r}$. By Corollary \ref{three ball ineq}, in both cases that $4\tilde{B}_j\subset\Omega$ and $4\tilde{B}_j\cap\partial\Omega\neq\varnothing$, we always have
    \begin{equation*}
        \sup_{\frac{3}{2}\tilde{B}_j\cap\Omega}|h| \leq C \left( \sup_{\tilde{B}_j\cap\Omega}|h| \right)^{1-\delta} \left( \sup_{4\tilde{B}_j\cap\Omega}|h| \right)^\delta \leq C e^{2\delta\sqrt{\lambda}r} \left( \sup_{\tilde{B}_j\cap\Omega}|h| \right)^{1-\delta},
    \end{equation*}
    where $C$ and $\delta$ only depends on $n$ and the $C^{1,1}$ character of $\Omega$. Therefore we obtain
    \begin{equation*}
        \sup_{\tilde{B}_j\cap\Omega}|h| \geq ce^{-\frac{2\delta}{1-\delta}\sqrt{\lambda}r} \left( \sup_{\frac{3}{2}\tilde{B}_j\cap\Omega}|h| \right)^{\frac{1}{1-\delta}} \geq ce^{-\frac{2\delta}{1-\delta}\sqrt{\lambda}r} \left( \sup_{\tilde{B}_{j+1}\cap\Omega}|h| \right)^{\frac{1}{1-\delta}}
    \end{equation*}
    Remind that we have $\sup_{\tilde{B}_J\cap\Omega}|h|=e^{\sqrt{\lambda}r/2}$. Combining the inequalities above, we obtain
    \begin{equation*}
        \sup_{\tilde{B}_0\cap\Omega}|h| \geq c_1e^{-C_2\sqrt{\lambda}},
    \end{equation*}
    where $c_1$ and $C_2$ depend on $r$ and $J$ but not on $\lambda$. The $r/8$-net $\mathcal{S}$ can be chosen so that the number of elements in $\mathcal{S}$ depends only on $diam(\Omega_0)$, $r$ and the dimension $n$. Thus $c_1$ and $C_2$ depend only on $diam(\Omega_0)$, $r$ and $n$.

    Finally, by Lemma \ref{local boundedness}, we have
    \begin{equation*}
        \begin{split}
            N_h(y,r) &= \log\frac{\int_{4\tilde{B}_0\cap\Omega} h^2}{\int_{2\tilde{B}_0\cap\Omega} h^2} \\
            &\leq \log\frac{\sup_{4\tilde{B}_0\cap\Omega} h^2}{\sup_{\tilde{B}_0\cap\Omega} h^2} + C\\
            &\leq (2r+C_2)\sqrt{\lambda}+C \leq C\sqrt{\lambda},
        \end{split}
    \end{equation*}
    where $C=C(\Omega_0,r)$. We also use the fact that $\lambda\geq\lambda_1(\Omega_0)>0$, where $\lambda_1(\Omega_0)$ is the first Neumann Laplace eigenvalue in $\Omega_0$.
\end{proof}

\end{subsection}

\begin{subsection}{Proof of Theorem \ref{main thm}}

Let $\Omega_0\subset\R^n$ be a bounded $C^{1,1}$ domain. Let $\tau_n$ be the constant as in Theorem \ref{nodal set bounds for harmonic functions}. For this $\tau_n$, there exists $r_0=r_0(\Omega_0)$ such that $\partial\Omega_0\cap B(x,r_0)\in Lip(\tau)$ for every $x\in\partial\Omega_0$. Consider the domain $\Omega=\Omega_0\times\R\subset\R^{n+1}$ and let $\Omega_1=\Omega_0\times[-1,1]$. Firstly, we cover the $2^{-12}r_0$-neighborhood of $\partial\Omega\times[-1,1]$ by balls centred on $\partial\Omega$ with radii $2^{-10}r_0$. Denote these balls by $\{B_j\}_{j=1}^J$. We have $\partial\Omega\cap128B_j\in Lip(\tau_n)$. The number $J$ depends only on $\partial\Omega_0$ and $r_0$ thus only on $\Omega_0$. Then we cover $\Omega_1\setminus(\cup_j B_j)$ by balls with centers in $\Omega_1\setminus(\cup_j B_j)$ and radii $2^{-15}r_0$. Denote these balls by $\{B'_k\}_{k=1}^K$. We have $8B'_k\subset\Omega$. The number $K$ depends only on $\Omega_0$.

Let now $u$ be a Neumann Laplace eigenfunction in $\Omega_0$: $\Delta u + \lambda u = 0$ in $\Omega_0$ and $\frac{\partial u}{\partial\nu}=0$ on $\partial\Omega_0$. Its harmonic extension $h(x,t)=e^{\sqrt{\lambda}t}u(x)$ satisfies $\frac{\partial h}{\partial\nu}$ on $\partial\Omega$. Let 
\begin{equation*}
    C_0=\max\{ C(2^{-8}r_0,\Omega_0), C(2^{13}r_0,\Omega_0) \},
\end{equation*}
where $C(r,\Omega_0)$ is as in Lemma \ref{estimate on doubling index}. Then for $B(x,r)\in\{B_j\}\cup\{B'_k\}$, we have $N_h(x,4r)\leq C_0\sqrt{\lambda}$. We apply Theorem \ref{nodal set bounds for harmonic functions} to the boundary balls $B_j$ and Lemma \ref{nodal set bounds for interior balls} to the inner balls $B'_k$ and conclude that
\begin{equation*}
    \begin{split}
        \mathcal{H}^n(Z(h)\cap\Omega_1) &\leq \sum_{j=1}^J \mathcal{H}^n(Z(h)\cap B_j) + \sum_{k=1}^K \mathcal{H}^n(Z(h)\cap B'_k)\\
        &\leq C(J,K)(C_0\sqrt{\lambda}+1)(r(B_j)^n+r(B'_k)^n) \leq C_1\sqrt{\lambda}.
    \end{split}
\end{equation*}
Then $\mathcal{H}^{n-1}(Z(u)\cap\Omega_0)\leq C_1\sqrt{\lambda}$, which finish the proof of Theorem \ref{main thm}.

\end{subsection}

\end{section}

\end{document}